\providecommand{\U}[1]{\protect\rule{.1in}{.1in}}
\newtheorem{theorem}{Theorem}
\theoremstyle{plain}
\newtheorem{corollary}{Corollary}
\newtheorem{definition}{Definition}
\newtheorem{example}{Example}
\newtheorem{lemma}{Lemma}
\newtheorem{proposition}{Proposition}
\numberwithin{equation}{section}
\begin{document}
\title[1-absorbing primary submodules]{1-absorbing primary submodules}
\author{Ece Yetkin Celikel}
\address{Department of Electrical-Electronics Engineering, Faculty of Engineering,
Hasan Kalyoncu University, Gaziantep, Turkey}
\email{ece.celikel@hku.edu.tr, yetkinece@gmail.com}
\thanks{This paper is in final form and no version of it will be submitted for
publication elsewhere.}
\date{January, 2021}
\subjclass[2000]{Primary 13A15, Secondary 13F05.}
\keywords{1-absorbing primary submodule, 1-absorbing primary ideal, 2-absorbing primary submodule}

\begin{abstract}
Let $R$ be a commutative ring with non-zero identity and $M$ be a unitary
$R$-module. The goal of this paper is to extend the concept of 1-absorbing
primary ideals to 1-absorbing primary submodules. A proper submodule $N$ of
$M$ is said to be a 1-absorbing primary submodule if whenever non-unit
elements $a,b\in R$ and $m\in M$ with $abm\in N$, then either $ab\in(N:_{R}M)$
or $m\in M-rad(N).$ Various properties and chacterizations of this class of
submodules are considered. Moreover, 1-absorbing primary avoidance theorem is proved.

\end{abstract}
\maketitle

\section{Introduction}

Throughout this paper, we shall assume unless otherwise stated, that all rings
are commutative with non-zero identity and all modules are considered to be
unitary. A \textit{prime} (resp. \textit{primary}) submodule is a proper
submodule $N$ of $M$ with the property that for $a\in R$ and $m\in M$, $am\in
N$ implies that $m\in N$ or $a\in(N:_{R}M)$ (resp. $a\in\sqrt{(N:_{R}M)}$).
Since prime and primary ideals (submodules) have an important role in the
theory of modules over commutative rings, generalizations of these concepts
have been studied by several authors \cite{Ame}-\cite{YF1}, \cite{M},
\cite{pay}. For a survey article consisting some of generalizations see
\cite{Badlast}. In 2007, Badawi \cite{B} called a non-zero proper ideal $I$ of
$R$ a \textit{2-absorbing ideal} of $R$ if whenever $a,b,c\in R$ and $abc\in
I$, then $ab\in I$ or $ac\in I$ or $bc\in I$. As an extension of 2-absorbing
primary ideals, the concept of 2-absorbing submodules are introduced by Darani
and Soheilnia \cite{YF1} and studied by Payrovi, Babaei \cite{pay}. We recall
from \cite{YF1} that a proper submodule $N$ of $M$ is said to be a
\textit{2-absorbing submodule} if whenever $a,b\in R$ and $m\in M$ with
$abm\in N$, then $ab\in(N:_{R}M)$ or $am\in N$ or $bm\in N$. In 2014, Badawi,
Tekir and Yetkin \cite{Badawi2} introduced the concept of $2$-absorbing
primary ideals. A proper ideal $I$ of $R$ is called \textit{2-absorbing
primary} if whenever $a,b,c\in R$ with $abc\in I$, then $ab\in I$ or
$ac\in\sqrt{I}$ or $bc\in\sqrt{I}$. After that, the notion of 2-absorbing
pimary submodules is introduced and studied in \cite{M}. According to
\cite{M}, a proper submodule $N$ of $M$ is said to be \textit{2-absorbing
primary }provided that $a,b\in R,$ $m\in M$ and $abm\in N$ imply either
$ab\in(N:_{R}M)$ or $am\in M$-$rad(N)$ or $bm\in M$-$rad(N).$ As a recent
study, the class of \textit{1-absorbing primary ideals }was defined in
\cite{badece}. According to \cite{badece}, a proper ideal $I$ of $R$ is said
to be a \textit{1-absorbing primary ideal} if whenever non-unit elements
$a,b,c$ of $R$ and $abc\in I$, then $ab\in I$ or $c\in\sqrt{I}.$ Our aim is to
extend the notion of 1-absorbing primary ideals to 1-absorbing primary submodules.

For the sake of thoroughness, we give some definitions which we will need
throughout this study. Let $I$ be an ideal of a ring $R$. By $\sqrt{I},$ we
mean the radical of $I$ which is the intersection of all prime ideals
containing $I$, that is $\{r\in R:r^{n}\in I$ for some $n\}$. Let $M$ be an
$R$-module and $N$ be a submodule of $M$. We will denote by $(N:_{R}M)$
\textit{the residual of $N$ by} $M$, that is, the set of all $r\in R$ such
that $rM\subseteq N$.\ The annihilator of $M$ denoted by $Ann_{R}(M)$ is
$(0:_{R}M)$. The $M$-radical of $N$, denoted by $M$-$\mathrm{rad}(N)$, is
defined to be the intersection of all prime submodules of $M$ containing $N$.
If $M$ is a multiplication $R$-module, then $M-rad(N)=\{m\in M:m^{k}\subseteq
N$ for some $k\geq0\}$ \cite[Theorem 3.13]{Ame}. If there is no such a prime
submodule, then $M-rad(N)=M.$ For the other notations and terminologies that
are used in this article, the reader is referred to \cite{Huc}.

We summarize the content of this article as follows. We call a proper
submodule $N$ of $M$ a 1-absorbing primary submodule if whenever non-unit
elements $a,b\in R$ and $m\in M$ with $abm\in N$, then $ab\in(N:_{R}M)$ or
$m\in M-rad(N).$ It is clear that a prime submodule is a 1-absorbing primary
submodule, and a 1-absorbing primary submodule is a 2-absorbing primary
submodule. In Section 2, we start with examples (Example \ref{ex1} and Example
\ref{ex2}) showing that the inverses of these implications are not true in
general. Various characterizations for 1-absorbing primary submodules are
given (Theorem \ref{l1}, Theorem \ref{tn} and Theorem \ref{t0}). Moreover, the
behavior of 1-absorbing primary submodules in modules under homomorphism,
module localizations and direct product of modules are investigated
(Proposition \ref{f}, Proposition \ref{tc} and Proposition \ref{S}). Finally,
in Section 3, the 1-absorbing primary avoidance theorem is proved.

\section{Properties of 1-absorbing primary submodules}

\begin{definition}
Let $M$ be a module over a commutative ring $R$ and $N$ be a proper submodule
of $M$. We call $N$ a 1-absorbing primary submodule if whenever non-unit
elements $a,b\in R$ and $m\in M$ with $abm\in N$, then $ab\in(N:_{R}M)$ or
$m\in M-rad(N).$
\end{definition}

It is clear that the following implication hold: prime submodule $\Rightarrow$
1-absorbing primary submodule $\Rightarrow$ 2-absorbing primary submodule. The
following example shows that a 1-absorbing primary submodule of $M$ needs not
to be a primary (prime) submodule; and also there are 2-absorbing primary
submodules which are not 1-absorbing primary.

\begin{example}
\begin{enumerate}
\item \label{ex1}Let $A=K[x,y]$, where $K$ is a field, $Q=(x,y)A$. Consider
$R=A_{Q}$ and $M=R$ as an $R$-module. Then $N=(x^{2},xy)M$ is a 1-absorbing
primary submodule of $M$ \cite[Example 1]{badece}. Observe that $\sqrt
{(N:_{R}M)}=xR$. Since $x\cdot y\in N$, but $x\notin N$ and $y\notin
\sqrt{(N:_{R}M)},$ $N$ is not a primary submodule (so, it is not a prime
submodule) of $M$.

\item Consider the submodule $N=p^{2}q%
\mathbb{Z}
$ of $%
\mathbb{Z}
$-module $%
\mathbb{Z}
$ where $p$ and $q$ are distinct prime integers. Then $N$ is a 2-absorbing
primary submodule of $%
\mathbb{Z}
$ by \cite[Corollary 2.21]{M}. However it is not 1-absorbing primary as
$p\cdot p\cdot q\in N$ but neither $p\cdot p\in(N:_{%
\mathbb{Z}
}%
\mathbb{Z}
)=N$ nor $q\in M-rad(N)=pq%
\mathbb{Z}
$.
\end{enumerate}
\end{example}

The following example shows that there are some modules of which every proper
submodule is 2-absorbing primary but which has no 1-absorbing primary submodule.

\begin{example}
\label{ex2}Let $p$ be a fixed prime integer. Then $E(p)=\{\alpha\in%
\mathbb{Q}
/%
\mathbb{Z}
:\alpha=r/p^{n}+%
\mathbb{Z}
$ for some $r\in%
\mathbb{Z}
$ and $n\in%
\mathbb{N}
\cup(0)\}$ is a non-zero submodule of $%
\mathbb{Z}
$-module $%
\mathbb{Q}
/%
\mathbb{Z}
$. For each $t\in%
\mathbb{N}
$, set $G_{t}=\{\alpha\in%
\mathbb{Q}
/%
\mathbb{Z}
:\alpha=r/p^{t}+Z$ for some $r\in%
\mathbb{Z}
\}$ . Observe that each proper submodule of $E(p)$ is equal to $G_{i}$ for
some $i\in%
\mathbb{N}
$ and $(G_{t}:_{%
\mathbb{Z}
}E(p))=0$ for every $t\in%
\mathbb{N}
.$ It is shown in \cite[Example 1]{At} that every submodule $G_{t}$ is not a
primary submodule of $E(p).$ Thus there is no prime submodule in $E(p).$ Thus
$E(p)-rad(G_{t})=E(p)$. Therefore, each $G_{t}$ is a 1-absorbing primary
submodule of $R$.
\end{example}

We next give several characterizations of 1-absorbing primary submodules of an
$R$-module.

\begin{theorem}
\label{l1}Let $N$ be a proper submodule of an $R$-module $M$. Then the
following statements are equivalent:
\end{theorem}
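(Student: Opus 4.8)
The plan is to prove the equivalences by a single cycle of implications, moving freely between statements phrased in terms of elements $m\in M$ and statements phrased in terms of submodules $L$ or residuals $(N:_{M}ab)=\{m\in M:abm\in N\}$. The structural fact I would establish first, and use throughout, is that $M\text{-rad}(N)$ is itself a submodule of $M$ (being an intersection of prime submodules, or all of $M$ when no such submodule exists), so that $m\in M\text{-rad}(N)$ exactly when $Rm\subseteq M\text{-rad}(N)$. This equivalence is what allows the passage between the cyclic submodule $Rm$ and the element $m$ with no loss.

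Starting from the definition as the first statement, the implication to the residual form is a direct unwinding: fixing non-unit $a,b\in R$ with $ab\notin(N:_{R}M)$, every $m\in(N:_{M}ab)$ satisfies $abm\in N$, so the definition forces $m\in M\text{-rad}(N)$, whence $(N:_{M}ab)\subseteq M\text{-rad}(N)$. For the submodule form, if $abL\subseteq N$ for a submodule $L$ and non-units $a,b$ with $ab\notin(N:_{R}M)$, then $L\subseteq(N:_{M}ab)\subseteq M\text{-rad}(N)$, which gives the claim.

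To close the cycle I would take an arbitrary instance $abm\in N$ with $a,b$ non-units and apply the submodule statement to $L=Rm$: since $N$ is a submodule, $abm\in N$ yields $ab(Rm)\subseteq N$, so either $ab\in(N:_{R}M)$ or $Rm\subseteq M\text{-rad}(N)$, and in the latter case the structural fact above returns $m\in M\text{-rad}(N)$. This recovers the definition and completes the equivalences.

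The main obstacle will be bookkeeping around the non-unit hypotheses rather than any deep computation. Unlike the prime or primary characterizations, here each implication must respect the restriction that $a$ and $b$ range only over non-units, so at every step I would verify that the elements to which the definition is applied are genuinely non-units and that any auxiliary condition of the form ``$ab\notin(N:_{R}M)\Rightarrow(N:_{M}ab)\subseteq M\text{-rad}(N)$ for all non-unit $a,b$'' is quantified correctly. If one of the listed statements is phrased with ideals in place of the principal product $ab$, I would additionally reduce a non-unit-generated situation to a product of two non-unit factors, which is precisely the place where the \emph{1-absorbing} structure, as opposed to the \emph{2-absorbing} one, is exploited.
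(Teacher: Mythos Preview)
Your proposal is correct and takes essentially the same approach as the paper. The paper's four statements are exactly the ones you anticipate---(1) the definition, (2) the residual form $(N:_{M}ab)\subseteq M\text{-}rad(N)$, (3) the submodule form $abK\subseteq N$, and (4) the ideal form $I_{1}I_{2}K\subseteq N$ for proper ideals $I_{1},I_{2}$---and the paper runs the cycle $(1)\Rightarrow(2)\Rightarrow(3)\Rightarrow(4)\Rightarrow(1)$; your $(1)\Rightarrow(2)\Rightarrow(3)$ steps match verbatim, and the paper's $(3)\Rightarrow(4)$ and $(4)\Rightarrow(1)$ are precisely your ``reduce to a product of two non-unit factors'' remark (elements of proper ideals are non-units) together with your $L=Rm$ closing move applied with $I_{1}=aR$, $I_{2}=bR$.
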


\begin{enumerate}
\item $N$ is a 1-absorbing primary submodule of $M.$

\item If $a,b$ are non-unit elements of $R$ such that $ab\notin(N:_{R}M),$
then $(N:_{M}ab)\subseteq M-rad(N).$

\item If $a,b$ are non-unit elements of $R$, and $K$ is a submodule of $M$
with $abK\subseteq N$, then $ab\in(N:_{R}M)$ or $K\subseteq M-rad(N).$

\item If $I_{1}I_{2}K\subseteq N$ for some proper ideals $I_{1},I_{2}$ of $R$
and some submodule $K$ of $M$, then either $I_{1}I_{2}\subseteq(N:_{R}M)$ or
$K\subseteq M-rad(N).$
\end{enumerate}

\begin{proof}
(1)$\Rightarrow$(2) Suppose that $a,b$ are non-unit elements of $R$ such that
$ab\notin(N:_{R}M).$ Let $m\in(N:_{M}ab)$. Hence $abm\in N$. Since $N$ is
1-absorbing primary submodule and $ab\notin(N:_{R}M),$ we have $m\in
M-rad(N),$ and so $(N:_{M}ab)\subseteq M-rad(N).$

(2)$\Rightarrow$(3) Suppose that $ab\notin(N:_{R}M)$. Since $abK\subseteq N,$
we have $K\subseteq(N:_{M}ab)\subseteq M-rad(N)$\ by (2).

(3)$\Rightarrow$(4) Assume on the contrary that neither $I_{1}I_{2}%
\subseteq(N:_{R}M)$ nor $K\subseteq M-rad(N).$ Then there exist non-unit
elements $a\in I_{1},$ $b\in I_{2}$ with $ab\notin(N:_{R}M)$. Thus
$abK\subseteq N$, it contradicts with (3).

(4)$\Rightarrow$(1) Let $a,b\in R$ be non-unit elements, $m\in M$ and $abm\in
N$. Put $I_{1}=aR$, $I_{2}=bR$, $K=Rm.$ Thus the result is clear.
\end{proof}

An $R$-module $M$ is called a \textit{multiplication module} if every
submodule $N$ of $M$ has the form $IM$ for some ideal $I$ of $R$.
Equivalently, $N=(N:_{R}M)M$ \cite{Smith}. Let $M$ be a multiplication
$R$-module and let $N=IM$ and $K=JM$ for some ideals $I$ and $J$ of $R$. The
product of $N$ and $K$ is denoted by $NK$ is defined by $IJM$. Clearly, $NK$
is a submodule of $M$ and contained in $N\cap K$. It is shown in \cite[Theorem
3.4]{Ame} that the product of $N$ and $K$ is independent of presentations of
$N$ and $K$. It is shown in \cite[Theorem 2.12]{Smith} that if $N$ is a proper
submodule of a multiplication $R$-module $M$, then $M$-$\mathrm{rad}%
(N)=\sqrt{(N:_{R}M)}M$. If $M$ is a finitely generated multiplication
$R$-module, then $(M$-$\mathrm{rad}(N):M)=\sqrt{(N:_{R}M)}$ by \cite[Lemma
2.4]{M}. Now, we are ready for characterizing 1-absorbing primary submodules
of finitely generated multiplication module $M$ in terms of submodules of $M$.

\begin{theorem}
\label{tn}Let $M$ be a finitely generated multiplication $R$-module and $N$ be
a proper submodule of $M.$ Then the following statements are equivalent:
\end{theorem}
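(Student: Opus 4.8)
The plan is to reduce the equivalences to the ideal-level characterization already obtained in Theorem~\ref{l1}, using the submodule/ideal dictionary that the finitely generated multiplication hypothesis provides. The facts recalled immediately before the statement are exactly the bridges I intend to cross: every submodule of $M$ has the form $IM$, the module product satisfies $(I_{1}M)(I_{2}M)=I_{1}I_{2}M$, $M\text{-}\mathrm{rad}(N)=\sqrt{(N:_{R}M)}M$, and $(M\text{-}\mathrm{rad}(N):_{R}M)=\sqrt{(N:_{R}M)}$. Because each of the listed statements is formulated in terms of submodules $N_{1}=I_{1}M$, $N_{2}=I_{2}M$ and $K=JM$, the uniform device is to rewrite a containment of submodules as the corresponding containment of residual ideals, invoke the matching clause of Theorem~\ref{l1}, and then transport the ideal conclusion back to submodules via $I\subseteq(N:_{R}M)\Rightarrow IM\subseteq N$ and $J\subseteq\sqrt{(N:_{R}M)}\Rightarrow JM\subseteq M\text{-}\mathrm{rad}(N)$. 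I would organize the proof as a cycle of implications so that each step is a single application of this translation.

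For the implication from ``$N$ is $1$-absorbing primary'' to the submodule-product form, suppose $N_{1}N_{2}K\subseteq N$ with $N_{1},N_{2}$ proper. Writing $N_{i}=I_{i}M$ and $K=JM$, properness of $N_{i}$ gives $I_{i}\neq R$ (otherwise $N_{i}=M$), and the product identity yields $I_{1}I_{2}JM=N_{1}N_{2}K\subseteq N$, i.e.\ $I_{1}I_{2}J\subseteq(N:_{R}M)$. Clause~(4) of Theorem~\ref{l1}, applied to the proper ideals $I_{1},I_{2}$ and the submodule $K$, then forces either $I_{1}I_{2}\subseteq(N:_{R}M)$, whence $N_{1}N_{2}=I_{1}I_{2}M\subseteq N$, or $K\subseteq M\text{-}\mathrm{rad}(N)$. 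Every other forward implication in the cycle follows the same template, decomposing the submodule hypothesis through the dictionary and re-encoding the output of Theorem~\ref{l1} with the two radical identities.

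For the converse, recovering the elementwise property, I would specialize to principal data: given non-unit $a,b\in R$ and $m\in M$ with $abm\in N$, take $N_{1}=aM$, $N_{2}=bM$ and $K=Rm$. The product identity gives $N_{1}N_{2}K=R(abm)\subseteq N$, and the assumed submodule statement then returns $N_{1}N_{2}=abM\subseteq N$, that is $ab\in(N:_{R}M)$, or $K=Rm\subseteq M\text{-}\mathrm{rad}(N)$, that is $m\in M\text{-}\mathrm{rad}(N)$, which is precisely the defining property of a $1$-absorbing primary submodule.

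The step I expect to be the main obstacle is the bookkeeping of properness in the reverse translation. The forward direction is safe because a proper submodule $N_{i}=I_{i}M$ automatically yields a proper residual ideal ($I_{i}=R$ would give $N_{i}=M$), but in the reverse direction a non-unit $a$ need not make $aM$ a proper submodule: it is possible that $aM=M$ even though $a$ is a non-unit. Resolving this requires the finitely generated multiplication hypotheses in an essential way --- they guarantee the clean correspondence $N_{i}\neq M\iff(N_{i}:_{R}M)\neq R$ and underlie the identity $(M\text{-}\mathrm{rad}(N):_{R}M)=\sqrt{(N:_{R}M)}$ --- and in the exceptional case $aM=M$ one must argue separately, for instance via the determinant trick to produce $s\in R$ with $(1-as)M=0$ and then reduce $abm\in N$ to a statement about $bm$. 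The remainder of the proof is routine substitution into Theorem~\ref{l1}; the care lies entirely in verifying these properness conditions at each translation step.
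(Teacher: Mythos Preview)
Your approach is essentially the paper's: both directions go through the submodule/ideal dictionary and Theorem~\ref{l1}. For $(1)\Rightarrow(2)$ the paper does exactly what you describe, writing $N_{1}=I_{1}M$, $N_{2}=I_{2}M$ and applying clause~(4) of Theorem~\ref{l1} to $I_{1}I_{2}N_{3}\subseteq N$. For $(2)\Rightarrow(1)$ the paper instead returns to clause~(4) of Theorem~\ref{l1}: given $I_{1}I_{2}K\subseteq N$ it writes $K=I_{3}M$, applies~(2) to $(I_{1}M)(I_{2}M)(I_{3}M)\subseteq N$, and then invokes a result of Smith to pass from $I_{1}I_{2}M\subseteq N$ back to $I_{1}I_{2}\subseteq(N:_{R}M)$. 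Your specialization $N_{1}=aM$, $N_{2}=bM$, $N_{3}=Rm$ is a bit more direct and avoids that citation.

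One correction: your anticipated ``main obstacle'' is not one. The paper's statement~(2) quantifies over \emph{all} submodules $N_{1},N_{2},N_{3}$, not just proper ones, so in $(2)\Rightarrow(1)$ you may simply take $N_{1}=aM$ and $N_{2}=bM$ without checking $aM\neq M$; the determinant-trick digression is unnecessary. (If anything, properness is a genuine bookkeeping point in the \emph{forward} direction, since clause~(4) of Theorem~\ref{l1} requires proper ideals; the paper handles this tacitly, and the degenerate cases $N_{i}=M$ are easy to dispatch by hand.)
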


\begin{enumerate}
\item $N$ is a 1-absorbing primary submodule of $M$.

\item If $N_{1}N_{2}N_{3}\subseteq N$ for some submodules $N_{1},N_{2}$%
,$N_{3}$ of $M,$ then either $N_{1}N_{2}\subseteq N$ or $N_{3}\subseteq
M-rad(N).$
\end{enumerate}

\begin{proof}
(1)$\Rightarrow$(2) Suppose that $N$ is a 1-absorbing primary submodule of
$M$, $N_{1}N_{2}N_{3}\subseteq N$ and $N_{3}\nsubseteq M-rad(N).$ Since $M$ is
a finitely generated multiplication module, $N_{1}=I_{1}M$ and $N_{2}=I_{2}M$
for some ideals $I_{1},I_{2}$ of $R$. Hence $I_{1}I_{2}N_{3}\subseteq N$.
Since $N_{3}\nsubseteq M-rad(N),$ we have $I_{1}I_{2}\subseteq(N:_{R}M)$ by
Theorem \ref{l1}. Thus we conclude $N_{1}N_{2}\subseteq(N:M)M=N$.

(2)$\Rightarrow$(1) Let $I_{1}I_{2}K\subseteq N.$ Then there exists an ideal
$I_{3}$ of $R$ such that $I_{1}I_{2}I_{3}M\subseteq N$ which gives $I_{1}%
I_{2}M\subseteq N$ or $I_{3}M\subseteq M-rad(N).$ By \cite[p.231
Corollary]{Smith2}, we have $I_{1}I_{2}\subseteq(N:_{R}M)+Ann_{R}%
(M)=(N:_{R}M)$ or $K\subseteq M-rad(N).$ Hence, we are done from Theorem
\ref{l1}.
\end{proof}

\begin{lemma}
\cite[Theorem 10]{Smith2}\label{9}Let $M$ be a finitely generated faithful
multiplication $R$-module, then $(IM:M)=I$ .for all ideals $I$ of $R$.
\end{lemma}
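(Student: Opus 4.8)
The plan is to prove the two inclusions $I \subseteq (IM:_{R}M)$ and $(IM:_{R}M) \subseteq I$ separately, noting that only the second uses the full hypotheses. The first is immediate and needs nothing beyond the definition of the residual: if $r \in I$, then $rM \subseteq IM$, so $r \in (IM:_{R}M)$, giving $I \subseteq (IM:_{R}M)$.

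For the reverse inclusion I would take an arbitrary $r \in (IM:_{R}M)$, so that $rM \subseteq IM$, and read this as the submodule containment $(rR)M \subseteq IM$. The crucial step is to convert this containment of submodules into a containment of ideals. Since $M$ is a multiplication module, the cancellation-type result already invoked in the excerpt, \cite[p.231 Corollary]{Smith2}, applies with $A=rR$ and $B=I$ and yields $rR \subseteq I + Ann_{R}(M)$. Faithfulness of $M$ forces $Ann_{R}(M)=0$, so $rR \subseteq I$ and hence $r \in I$. Combining the two inclusions gives $(IM:_{R}M)=I$.

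The main obstacle is pinning down which property actually supplies the reverse inclusion: neither the multiplication hypothesis, nor faithfulness, nor finite generation suffices on its own, and it is precisely their interaction that makes the assignment $J \mapsto JM$ on ideals injective. If one prefers to argue without quoting the corollary, the cleanest self-contained route is localization. For each maximal ideal $P$ of $R$, the standard local characterization of multiplication modules makes $M_{P}$ a cyclic $R_{P}$-module (finite generation and faithfulness forcing $M_{P}\neq 0$, so that the cyclic alternative applies), while finite generation also lets the annihilator localize, so faithfulness of $M$ descends to $M_{P}$; a faithful cyclic module is free of rank one, giving $M_{P}\cong R_{P}$, for which $(I_{P}R_{P}:_{R_{P}}R_{P})=I_{P}$ is trivial. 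One then checks $(IM:_{R}M)=I$ locally at every maximal ideal to recover the global equality, the only delicate point being that $(IM:_{R}M)$ commutes with localization, which again rests on $M$ being finitely generated.
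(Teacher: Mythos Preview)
The paper does not actually prove this lemma: it is quoted verbatim as \cite[Theorem 10]{Smith2} and used as a black box, so there is no in-paper argument to compare against. Your proposal is a correct proof. The first route---applying the corollary on p.~231 of \cite{Smith2} to $(rR)M\subseteq IM$ and then killing $Ann_R(M)$ by faithfulness---is exactly how one would derive the lemma from the tools already invoked elsewhere in the paper (cf.\ the proof of Theorem~\ref{tn}, where the same corollary is used). Your alternative localization argument is also sound; the points you flag (finite generation to guarantee $M_P\neq 0$ and to make both $Ann_R(M)$ and $(IM:_RM)$ commute with localization) are precisely the places where the hypotheses enter.
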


In \cite[Corollary 2]{At}, for a proper submodule $N$ of a multiplication
$R$-module $M,$ it is shown that $N$ is primary submodule of $M$ if and only
if $(N:_{R}M)$ is primary ideal of $R$. Analogous with this result, we have
the following.

\begin{theorem}
\label{t0}Let $I$ be an ideal of a ring $R$ and $N$ be a submodule of a
finitely generated faithful multiplication $R$-module $M$. Then
\end{theorem}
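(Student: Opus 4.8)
The plan is to prove the stated equivalences by passing back and forth between the submodule $N$ and its residual ideal $(N:_{R}M)$, using the dictionary supplied by the multiplication hypothesis. Since $M$ is a multiplication module we always have $N=(N:_{R}M)M$, and since $M$ is in addition finitely generated and faithful, Lemma \ref{9} gives $(IM:_{R}M)=I$ for every ideal $I$; together these identities let me identify $N$ with $(N:_{R}M)M$ and recover the ideal from the submodule without loss. The two further tools I will lean on are the identity $M\text{-}rad(N)=\sqrt{(N:_{R}M)}M$ for finitely generated multiplication modules and its companion $(M\text{-}rad(N):_{R}M)=\sqrt{(N:_{R}M)}$, both recorded immediately before the theorem.

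First I would show that if $N$ is $1$-absorbing primary, then $(N:_{R}M)$ is a $1$-absorbing primary ideal. I take non-unit elements $a,b,c\in R$ with $abc\in(N:_{R}M)$ and assume $ab\notin(N:_{R}M)$. Then for every $m\in M$ we have $ab(cm)=(abc)m\in N$, so the submodule definition (applied with the module element $cm$) forces $cm\in M\text{-}rad(N)$. As $m$ ranges over $M$ this says $cM\subseteq M\text{-}rad(N)$, i.e. $c\in(M\text{-}rad(N):_{R}M)=\sqrt{(N:_{R}M)}$, which is exactly the conclusion required of a $1$-absorbing primary ideal.

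For the converse I start from non-units $a,b\in R$ and $m\in M$ with $abm\in N$ and $ab\notin(N:_{R}M)$, and form the residual ideal $J=(Rm:_{R}M)$, so that $Rm=JM$ by the multiplication property. From $abm\in N$ I get $ab\,J\,M=ab\,(Rm)=R(abm)\subseteq N$, hence $abJ\subseteq(N:_{R}M)$. The idea is then to feed an arbitrary $c\in J$ into the ideal hypothesis: $abc\in(N:_{R}M)$ together with $ab\notin(N:_{R}M)$ yields $c\in\sqrt{(N:_{R}M)}$, so $J\subseteq\sqrt{(N:_{R}M)}$ and therefore $m\in Rm=JM\subseteq\sqrt{(N:_{R}M)}M=M\text{-}rad(N)$, as needed. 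Finally the equivalence with the formulation $N=IM$ for a $1$-absorbing primary ideal $I$ is immediate: in one direction take $I=(N:_{R}M)$ and invoke the previous step, and in the other use $(IM:_{R}M)=I$ from Lemma \ref{9} to transfer the ideal property back to $(N:_{R}M)$.

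The step that needs the most care is the unit/non-unit bookkeeping in the converse, because the ideal notion of \emph{$1$-absorbing primary} only constrains products of three \emph{non-unit} elements, whereas the element $c\in J$ above is a priori arbitrary. I must therefore rule out that some $c\in J$ is a unit. But a unit lying in $J=(Rm:_{R}M)$ forces $J=R$, hence $M=Rm$, and then $abM=ab\,Rm=R(abm)\subseteq N$ gives $ab\in(N:_{R}M)$, contradicting the standing assumption $ab\notin(N:_{R}M)$. Consequently, under that assumption every element of $J$ is a non-unit and the ideal hypothesis applies legitimately to each $abc$; this verification is the crux of the argument, the remaining manipulations being routine applications of the multiplication and faithfulness hypotheses.
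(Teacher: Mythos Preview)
Your proof is correct and follows essentially the same route as the paper's: both pass to the residual ideal $(Rm:_{R}M)$ and use Lemma \ref{9} together with the identities $M\text{-}rad(N)=\sqrt{(N:_{R}M)}M$ and $(M\text{-}rad(N):_{R}M)=\sqrt{(N:_{R}M)}$ to translate between the ideal and submodule conditions. The only differences are organizational---you establish part (2) directly whereas the paper first proves part (1) and deduces (2) and (3)---and your explicit verification that every $c\in J=(Rm:_{R}M)$ is a non-unit is a detail the paper silently skips.
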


\begin{enumerate}
\item $I$ is a 1-absorbing primary ideal of $R$ if and only if $IM$ is a
1-absorbing primary submodule of $M$.

\item $N$ is a 1-absorbing primary submodule of $M$ if and only if $(N:M)$ is
a 1-absorbing primary ideal of $R$.

\item $N$ is a 1-absorbing primary submodule of $M$ if and only if $N=IM$ for
some 1-absorbing primary ideal of $R$.
\end{enumerate}

\begin{proof}
(1) Suppose $I$ is a 1-absorbing primary ideal of $R$. If $IM=M$, then
$I=(IM:M)=R$ by Lemma \ref{9}, a contradiction. Thus, $IM$ is proper in $M$.
Now, let $a,b\in R$ be non-unit elements and $m\in M$ such that $abm\in IM$
and $ab\notin(IM:M)=I$. Then $ab((m):M)=((abm):M)\subseteq(IM:M)\subseteq
(\sqrt{I}M:M)=\sqrt{I}$. Since $I$ is a 1-absorbing primary ideal, we conclude
that $((m):M)\subseteq\sqrt{I}$. Thus, $m\in((m):M)M\subseteq\sqrt{I}%
M=M$-$rad(IM)$. Conversely, suppose $IM$ is 1-absorbing primary submodule of
$M$. Then clearly $I$ is proper in $R$. Let $a,b,c\in R$ be non-unit elements
with $abc\in I$ and $ab\notin(IM:M)=I$. Since $abM\in IM$ and $IM$ is a
1-absorbing primary submodule, then $cM\subseteq M$-$rad(IM)=\sqrt{I}M$.
Therefore, $c\in(\sqrt{I}M:M)=\sqrt{I}$ and $I$ is a 1-absorbing primary ideal
of $R$.

(2) Since $N=(N:M)M$, it follows by (1).

(3) Putting $I=(N:M)$ in (2), the claim is clear.
\end{proof}

The following example shows that if $(N:_{R}M)$ is a 1-absorbing primary ideal
of $R,$ then $N$ is not needed to be a 1-absorbing primary submodule in general.

\begin{example}
Let $R=%
\mathbb{Z}
$ and $M=%
\mathbb{Z}
\times%
\mathbb{Z}
$ be an $R$-module and $p$ a prime integer. Consider the submodule $N=p^{n}%
\mathbb{Z}
\times\{0\}$ of $M$ for $n\geq2.$ Then $(N:_{R}M)=\{0\}$ is a 1-absorbing
primary ideal of $R$. However, $N$ is not a 1-absorbing primary submodule of
$M$ since $p\cdot p^{n-1}\cdot(1,0)\in N$ but neither $p\cdot p^{n-1}=p^{n}%
\in(N:_{R}M)=\{0\}$ nor $(1,0)\in M-rad(N)=p%
\mathbb{Z}
\times\{0\}$.
\end{example}

In view of Theorem \ref{t0}, we conclude the following result.

\begin{proposition}
\label{t1}Let $M$ be a finitely generated multiplication $R$-module and $N$ be
a 1-absorbing primary submodule of $M$. Then the following are satisfied:
\end{proposition}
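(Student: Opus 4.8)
The plan is to reduce every assertion to the corresponding statement about the ideal $I := (N:_{R}M)$ and then invoke the known structure theory of 1-absorbing primary ideals developed in \cite{badece}. The engine is Theorem \ref{t0}(2): since $M$ is a finitely generated multiplication $R$-module, $N$ is a 1-absorbing primary submodule if and only if $I=(N:_{R}M)$ is a 1-absorbing primary ideal of $R$. I would open by recording this equivalence and fixing the notation $I=(N:_{R}M)$ and $P=\sqrt{I}$, so that each subsequent item can be phrased first for $N$ and then transferred to $I$.

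First I would set up the radical dictionary. Because $M$ is a finitely generated multiplication module, the two identities quoted before Theorem \ref{tn} are available without any extra hypothesis: $M\text{-}\mathrm{rad}(N)=\sqrt{(N:_{R}M)}M=PM$ by \cite[Theorem 2.12]{Smith}, and $(M\text{-}\mathrm{rad}(N):_{R}M)=\sqrt{(N:_{R}M)}=P$ by \cite[Lemma 2.4]{M}. These let me pass freely between the submodule $M\text{-}\mathrm{rad}(N)$ and the ideal $P$, and they are exactly the tools needed to rewrite any condition on $N$ or $M\text{-}\mathrm{rad}(N)$ as a condition on $I$ or $P$.

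Next I would transport each structural property of the 1-absorbing primary ideal $I$ up to $N$. For instance, to see that $P=\sqrt{(N:_{R}M)}$ is a prime ideal, I invoke the corresponding result for 1-absorbing primary ideals from \cite{badece}; to deduce that $M\text{-}\mathrm{rad}(N)=PM$ is a prime submodule, I combine this with the standard fact that for a finitely generated multiplication module the assignment $P\mapsto PM$ sends a prime ideal containing $I$ to a prime submodule, using $(PM:_{R}M)=P$ from the dictionary above. Each remaining item follows the same template: state it for $N$, push it down to $I$ via the residual, apply the ideal-level statement from \cite{badece}, and push it back up using the multiplication-module correspondence. Properness of $M\text{-}\mathrm{rad}(N)$, which is needed for ``prime submodule'' to be meaningful, is immediate since $N$ is proper and hence $P$ is a proper ideal.

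The main obstacle I anticipate is bookkeeping around the radical and the ideal-to-submodule prime correspondence rather than any deep idea. The delicate point is that the proposition only assumes $M$ is finitely generated multiplication, not faithful, so I cannot use Lemma \ref{9} (which requires faithfulness) to identify $(PM:_{R}M)$ with $P$; instead I must consistently rely on the identity $(M\text{-}\mathrm{rad}(N):_{R}M)=\sqrt{(N:_{R}M)}$ of \cite[Lemma 2.4]{M}, which holds in the non-faithful case. Keeping this distinction straight, and making sure each transfer across the dictionary lands in the exactly correct ideal or submodule, is where the care is required.
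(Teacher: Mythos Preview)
Your plan for items (1) and (3) is essentially the paper's own argument: pass to the 1-absorbing primary ideal $I=(N:_{R}M)$ via Theorem~\ref{t0}, cite \cite[Theorem~2]{badece} for the primality of $P=\sqrt{I}$, and then use $M\text{-}\mathrm{rad}(N)=PM$ together with \cite[Corollary~2.11]{Smith} to conclude that $M\text{-}\mathrm{rad}(N)$ is a prime submodule.

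The gap is item~(2). That statement concerns $\sqrt{(N:_{R}m)}$ for each individual $m\notin M\text{-}\mathrm{rad}(N)$, and this colon by an element does not reduce to a statement about $I=(N:_{R}M)$ alone; there is no ready-made result in \cite{badece} that your ``push down, cite, push up'' template can invoke here. The paper instead handles (2) by a short direct computation at the submodule level: if $ab\in\sqrt{(N:_{R}m)}$ with $a,b$ non-units, then $a^{n}b^{n}m\in N$ for some $n$; since $m\notin M\text{-}\mathrm{rad}(N)$ and $N$ is 1-absorbing primary, $(ab)^{n}\in(N:_{R}M)$, so $ab\in P$; primality of $P$ from (1) then gives $a\in P\subseteq\sqrt{(N:_{R}m)}$ or $b\in P\subseteq\sqrt{(N:_{R}m)}$. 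You should insert this direct argument rather than expect the ideal dictionary to carry it.

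One smaller inconsistency: you invoke Theorem~\ref{t0}(2) as your engine, but that theorem is stated for \emph{faithful} finitely generated multiplication modules, precisely the hypothesis you later say you must avoid. The paper makes the same citation, and the direction actually needed ($N$ 1-absorbing primary $\Rightarrow (N:_{R}M)$ 1-absorbing primary) does go through without faithfulness once you use $(M\text{-}\mathrm{rad}(N):_{R}M)=\sqrt{(N:_{R}M)}$ from \cite[Lemma~2.4]{M}; but since you flag faithfulness as the delicate point, you should also note explicitly that this direction of Theorem~\ref{t0} survives without it.
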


\begin{enumerate}
\item $\sqrt{(N:_{R}M)}$ is a prime ideal of $R$.

\item $\sqrt{(N:_{R}m)}$ is a prime ideal of $R$ containing $\sqrt{(N:_{R}%
M)}=P$ for every $m\notin M-rad(N)$.

\item $M-rad(N)$ is a prime submodule of $M$.
\end{enumerate}

\begin{proof}
(1) Let $N$ be a 1-absorbing primary submodule of $M$. Then $(N:_{R}M)$ is
1-absorbing primary\ ideal of $R$ by Theorem \ref{t0}. From \cite[Theorem
2]{badece}, we conclude that$\sqrt{(N:_{R}M)}$ is a prime ideal of $R$.

(2) Since $N$ is a 1-absorbing primary ideal, $\sqrt{(N:_{R}M)}=P$ is a prime
ideal of $R$ by (1). Suppose that $a,b\in R$ such that $ab\in\sqrt{(N:_{R}m)}%
$. Without loss of generality we may assume that $a$ and $b$ are non-unit
elements of $R$. Then there exists a positive integer $n$ such that
$a^{n}b^{n}m\in N$. Since $N$ is 1-absorbing primary submodule, and $m\notin
M-rad(N)$, it implies that either $(ab)^{n}\in(N:_{R}M)$. Since $P$ is prime
and $ab\in P$, we conclude either $a\in P=\sqrt{(N:_{R}M)}\subseteq
\sqrt{(N:_{R}m)}$ or $b\in P=\sqrt{(N:_{R}M)}\subseteq\sqrt{(N:_{R}m)}.$

(3) Suppose thet $N$ is a 1-absorbing primary submodule. Since $\sqrt
{(N:_{R}M)}$ is a prime ideal of $R$ by (1), we conclude that $M-rad(N)=\sqrt
{(N:_{R}M)}M$ is a prime submodule of $M$ by \cite[Corollary 2.11]{Smith}.
\end{proof}

Note that the intersection of two distinct non-zero 1-absorbing primary
submodules need not be a 1-absorbing primary submodule. Consider $%
\mathbb{Z}
$-module $%
\mathbb{Z}
$. Then $2%
\mathbb{Z}
$ and $3%
\mathbb{Z}
$ are clearly 1-absorbing primary submodules but $2%
\mathbb{Z}
\cap3%
\mathbb{Z}
=6%
\mathbb{Z}
$ is not. Indeed, $2\cdot2\cdot3\in6%
\mathbb{Z}
$ but neither $2\cdot2\in(%
\mathbb{Z}
:6%
\mathbb{Z}
)=6%
\mathbb{Z}
$ nor $3\in%
\mathbb{Z}
-rad(6%
\mathbb{Z}
)=6%
\mathbb{Z}
.$ We call a proper submodule $N$ of $M$ a $P$-1-absorbing submodule of $M$ if
$\sqrt{(N:_{R}M)}=P$ is a prime submodule of $R.$ In the next theorem, we show
that if $N_{i}$'s are $P$-1-absorbing primary submodules of a multiplication
module $M$, then the intersection of these submodules is a $P$-1-absorbing
primary submodule of $M$.

\begin{proposition}
\label{int}Let $M$ be a multiplication $R$-module. If $\{N_{i}\}_{i=1}^{k}$ is
a family of $P$-1-absorbing primary submodules of $M$, then so is $%
{\displaystyle\bigcap\limits_{i=1}^{k}}
N_{i}$.
\end{proposition}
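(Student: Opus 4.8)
The plan is to prove that $N=\bigcap_{i=1}^{k} N_i$ is a $P$-1-absorbing primary submodule, which requires showing two things: first that $\sqrt{(N:_R M)}=P$, and second that $N$ satisfies the 1-absorbing primary condition. For the radical computation, I would use that $(N:_R M)=\bigl(\bigcap_i N_i :_R M\bigr)=\bigcap_i (N_i :_R M)$, so $\sqrt{(N:_R M)}=\sqrt{\bigcap_i (N_i:_R M)}=\bigcap_i \sqrt{(N_i:_R M)}=\bigcap_i P=P$, using that radical commutes with finite intersection. This immediately gives that $M\text{-}\mathrm{rad}(N)=\sqrt{(N:_R M)}M=PM=M\text{-}\mathrm{rad}(N_i)$ for each $i$, by Smith's result \cite[Theorem 2.12]{Smith} cited in the excerpt (valid since $M$ is multiplication); so all the $N_i$ and their intersection share the same $M$-radical, call it $M\text{-}\mathrm{rad}(N)=PM$.

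The main verification is the 1-absorbing primary property. Suppose $a,b\in R$ are non-unit elements and $m\in M$ with $abm\in N$, and assume $m\notin M\text{-}\mathrm{rad}(N)=PM$. I must show $ab\in(N:_R M)$. Since $abm\in N=\bigcap_i N_i$, we have $abm\in N_i$ for every $i$. Fix an index $i$: because $N_i$ is 1-absorbing primary and $m\notin PM = M\text{-}\mathrm{rad}(N_i)$, the definition forces $ab\in(N_i:_R M)$. As this holds for every $i=1,\dots,k$, we obtain $ab\in\bigcap_i (N_i:_R M)=(N:_R M)$, which is exactly what we wanted. I should also note at the outset that $N$ is proper: since each $N_i$ is proper and $\sqrt{(N:_R M)}=P$ is a prime (hence proper) ideal, $(N:_R M)\neq R$, so $N\neq M$.

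The step I expect to require the most care is the radical identity $\sqrt{\bigcap_i J_i}=\bigcap_i \sqrt{J_i}$ and the translation between submodule radicals and ideal radicals. The inclusion $\sqrt{\bigcap_i J_i}\subseteq\bigcap_i\sqrt{J_i}$ is immediate; the reverse needs the standard fact that for finitely many ideals $x\in\bigcap_i\sqrt{J_i}$ gives powers $x^{n_i}\in J_i$, so $x^{\max_i n_i}\in\bigcap_i J_i$. This is routine but is the one genuinely computational point. Everything else is a clean reduction: the crux is simply observing that the common value $PM$ of $M\text{-}\mathrm{rad}(N_i)$ lets the hypothesis $m\notin M\text{-}\mathrm{rad}(N)$ be applied simultaneously to each factor $N_i$, after which the conclusion $ab\in(N:_R M)$ follows by intersecting the individual residuals. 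No finiteness beyond making the radical-of-intersection identity hold is needed elsewhere, and indeed the finiteness of the family is exactly what that identity relies on.
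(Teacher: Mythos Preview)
Your proof is correct and follows essentially the same approach as the paper: both hinge on the observation that $M\text{-}\mathrm{rad}(N_i)=PM=M\text{-}\mathrm{rad}\bigl(\bigcap_i N_i\bigr)$ for all $i$, after which the 1-absorbing primary condition transfers immediately. The only cosmetic differences are that the paper argues contrapositively (assume $ab\notin(N:_RM)$, find one bad index $j$, and conclude $m\in M\text{-}\mathrm{rad}(N_j)=PM$) and cites \cite[Proposition 2.14(3)]{M} for the $M$-radical identity, whereas you argue the direct direction and compute the radical explicitly via $(N:_RM)=\bigcap_i(N_i:_RM)$ and Smith's theorem; your version is also slightly more complete in verifying properness and the $P$-part of the conclusion.
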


\begin{proof}
Suppose that $abm\in%
{\displaystyle\bigcap\limits_{i=1}^{k}}
N_{i}$ but $ab\notin\left(
{\displaystyle\bigcap\limits_{i=1}^{k}}
N_{i}:_{R}M\right)  $ for non-unit elements $a,b\in R$ and $m\in M$. Then
$ab\notin(N_{j}:_{R}M)$ for some $j\in\{1,...,k\}.$ Since $N_{j}$ is
1-absorbing primary and $abm\in N_{j}$, we have $m\in M-rad(N_{j})$. Now,
since $M-rad\left(
{\displaystyle\bigcap\limits_{i=1}^{k}}
N_{i}\right)  =%
{\displaystyle\bigcap\limits_{i=1}^{k}}
M-rad(N_{i})=PM$ by \cite[Proposition 2.14 (3)]{M}, we are done.
\end{proof}

\begin{lemma}
\cite{Lu2}\label{l4}Let $\varphi:M_{1}\longrightarrow M_{2}$ be an $R$-module
epimorphism. Then

\begin{enumerate}
\item If $N$ is a submodule of $M_{1}$ and $\ker\left(  \varphi\right)
\subseteq N$, then $\varphi\left(  M_{1}\text{-}rad(N)\right)  =M_{2}%
$-$rad(\varphi\left(  N\right)  )$.

\item If $K$ is a submodule of $M_{2}$, then $\varphi^{-1}\left(
M_{2}\text{-}rad(K)\right)  =M_{1}$-$rad(\varphi^{-1}\left(  K\right)  )$.
\end{enumerate}
\end{lemma}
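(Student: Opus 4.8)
The plan is to reduce both identities to the inclusion-preserving correspondence between submodules induced by the epimorphism $\varphi$, together with the fact that this correspondence carries prime submodules to prime submodules. Since, by definition, $M_{1}\text{-}rad(N)$ is the intersection of all prime submodules of $M_{1}$ containing $N$ (and is the whole module when no such prime submodule exists, matching the empty-intersection convention), the two assertions are really statements about how $\varphi$ and $\varphi^{-1}$ interact with these intersections.

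First I would establish the correspondence on prime submodules. For a prime submodule $Q$ of $M_{2}$, I claim $\varphi^{-1}(Q)$ is a prime submodule of $M_{1}$ containing $\ker\varphi$: if $am\in\varphi^{-1}(Q)$ with $a\in R$ and $m\in M_{1}$, then $a\varphi(m)=\varphi(am)\in Q$, so primeness of $Q$ gives $\varphi(m)\in Q$ or $aM_{2}\subseteq Q$; since $\varphi$ is onto, $aM_{2}=\varphi(aM_{1})$, and these alternatives become $m\in\varphi^{-1}(Q)$ or $aM_{1}\subseteq\varphi^{-1}(Q)$. Conversely, for a prime submodule $P$ of $M_{1}$ with $\ker\varphi\subseteq P$, the image $\varphi(P)$ is prime: writing an arbitrary element of $M_{2}$ as $\varphi(m)$, the condition $a\varphi(m)\in\varphi(P)$ reads $\varphi(am)\in\varphi(P)$, and since $\ker\varphi\subseteq P$ we have $\varphi^{-1}(\varphi(P))=P$, so $am\in P$ and primeness of $P$ finishes the check. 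Using $\varphi(\varphi^{-1}(K))=K$ and $\varphi^{-1}(\varphi(N))=N$ (the latter valid because $\ker\varphi\subseteq N$), this makes $P\mapsto\varphi(P)$ an inclusion-preserving bijection from the prime submodules of $M_{1}$ above $\ker\varphi$ onto all prime submodules of $M_{2}$, with inverse $Q\mapsto\varphi^{-1}(Q)$.

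For part (2), I would write $M_{2}\text{-}rad(K)=\bigcap Q$ over the prime submodules $Q\supseteq K$ of $M_{2}$, apply $\varphi^{-1}$, and use that preimages always commute with intersection to get $\varphi^{-1}(M_{2}\text{-}rad(K))=\bigcap\varphi^{-1}(Q)$. Since $\varphi^{-1}(K)\supseteq\ker\varphi$ automatically and $K\subseteq Q$ is equivalent to $\varphi^{-1}(K)\subseteq\varphi^{-1}(Q)$, the correspondence identifies the family $\{\varphi^{-1}(Q)\}$ with exactly the prime submodules of $M_{1}$ containing $\varphi^{-1}(K)$; hence the intersection equals $M_{1}\text{-}rad(\varphi^{-1}(K))$. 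Part (1) runs through images instead: with $M_{1}\text{-}rad(N)=\bigcap P$ over prime $P\supseteq N$ (each containing $\ker\varphi$), I need $\varphi(\bigcap P)=\bigcap\varphi(P)$, after which the correspondence identifies the $\varphi(P)$ with the prime submodules of $M_{2}$ above $\varphi(N)$ and gives $\varphi(M_{1}\text{-}rad(N))=M_{2}\text{-}rad(\varphi(N))$.

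The one genuinely non-formal step, and the place I expect the main obstacle, is the commutation $\varphi(\bigcap P)=\bigcap\varphi(P)$ in part (1), since images do not distribute over intersections in general; this is exactly where the hypothesis $\ker\varphi\subseteq N$ is used. I would isolate it as a sub-lemma: if every member of a nonempty family $\{P_{i}\}$ contains $\ker\varphi$, then $\varphi(\bigcap_{i}P_{i})=\bigcap_{i}\varphi(P_{i})$. Only $\supseteq$ needs argument: given $y\in\bigcap_{i}\varphi(P_{i})$, fix $x\in P_{i_{0}}$ with $\varphi(x)=y$; for every other $i$, choosing $p_{i}\in P_{i}$ with $\varphi(p_{i})=y$ yields $x-p_{i}\in\ker\varphi\subseteq P_{i}$, so $x\in P_{i}$, whence $x\in\bigcap_{i}P_{i}$ and $y\in\varphi(\bigcap_{i}P_{i})$. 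Finally I would check the boundary convention, namely that when no prime submodule lies above $N$ both sides of (1) collapse to $M_{2}=\varphi(M_{1})$, consistent with $M_{1}\text{-}rad(N)=M_{1}$; part (2) needs no such care because $\varphi^{-1}(K)$ always contains the kernel and preimage commutes with arbitrary (including empty) intersections.
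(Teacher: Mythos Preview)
The paper does not supply its own proof of this lemma; it is quoted verbatim as a known result from \cite{Lu2} and used as a black box in the proof of Proposition~\ref{f}. Your argument is correct and is the standard one: set up the bijection $P\leftrightarrow\varphi(P)$ between prime submodules of $M_{1}$ containing $\ker\varphi$ and prime submodules of $M_{2}$, then transport the defining intersections through $\varphi$ and $\varphi^{-1}$. You have correctly identified the only delicate point, namely that images need not commute with intersections, and your sub-lemma handling $\varphi\bigl(\bigcap_{i}P_{i}\bigr)=\bigcap_{i}\varphi(P_{i})$ when each $P_{i}\supseteq\ker\varphi$ is exactly what is required. One minor remark on your closing sentence: in the empty case for part~(2) you still implicitly need the correspondence to conclude that no prime of $M_{1}$ lies above $\varphi^{-1}(K)$ when no prime of $M_{2}$ lies above $K$, but this is immediate from the bijection you already established.
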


\begin{proposition}
\label{f}Let $M_{1}$ and $M_{2}$ be $R$-modules and $f:M_{1}\rightarrow M_{2}$
be a module homomorphism. Then the following statements hold:
\end{proposition}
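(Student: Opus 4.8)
The plan is to treat two natural directions separately, since the proposition (following Lemma \ref{l4}) presumably asserts (1) that the contraction $f^{-1}(N)$ of a $1$-absorbing primary submodule $N$ of $M_2$ along a monomorphism $f$ is again $1$-absorbing primary whenever it remains proper, and (2) that the image $f(N)$ of a $1$-absorbing primary submodule $N$ of $M_1$ under an epimorphism $f$ with $\ker f\subseteq N$ is $1$-absorbing primary in $M_2$. In both cases I would begin from the defining condition: take non-unit elements $a,b\in R$ and an element $m$ with $abm$ in the relevant submodule, transport the hypothesis through $f$ so that it lands inside $N$, and then invoke the $1$-absorbing primary property of $N$. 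The two outputs of that property, namely membership in the residual and membership in the $M$-radical, must then be pushed or pulled back appropriately.

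For the epimorphism statement (2) I would first observe that $abm_2\in f(N)$ with $m_2=f(m_1)$ forces $abm_1\in N$: from $ab f(m_1)=f(abm_1)\in f(N)$ we get $abm_1\in N+\ker f=N$, using $\ker f\subseteq N$. Since $N$ is $1$-absorbing primary, either $ab\in(N:_RM_1)$ or $m_1\in M_1\text{-}rad(N)$. In the first case $abM_2=f(abM_1)\subseteq f(N)$ gives $ab\in(f(N):_RM_2)$; in the second case $m_2=f(m_1)\in f\big(M_1\text{-}rad(N)\big)=M_2\text{-}rad(f(N))$ directly by Lemma \ref{l4}(1), which applies precisely because $f$ is an epimorphism with $\ker f\subseteq N$. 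This closes case (2) cleanly, and notably the residual computation uses only surjectivity.

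The monomorphism statement (1) has an easy residual half—if $ab f(m)\in N$ and $ab\in(N:_RM_2)$, then $f(abx)=ab f(x)\in N$ for every $x\in M_1$, so $ab\in(f^{-1}(N):_RM_1)$—and I expect the radical half to be the main obstacle. There I must deduce $m\in M_1\text{-}rad(f^{-1}(N))$ from $f(m)\in M_2\text{-}rad(N)$. The inclusion obtained for free by contracting prime submodules runs the wrong way, namely $M_1\text{-}rad(f^{-1}(N))\subseteq f^{-1}(M_2\text{-}rad(N))$, and Lemma \ref{l4}(2) is available only for epimorphisms. My strategy would be to factor $f$ through its corestriction $\bar f:M_1\to f(M_1)$, which is an isomorphism, apply Lemma \ref{l4}(2) to $\bar f$ with $K=N\cap f(M_1)$ to obtain $M_1\text{-}rad(f^{-1}(N))=\bar f^{-1}\big(f(M_1)\text{-}rad(N\cap f(M_1))\big)$, and then reduce to showing $M_2\text{-}rad(N)\cap f(M_1)\subseteq f(M_1)\text{-}rad(N\cap f(M_1))$. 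This last compatibility between the ambient $M_2$-radical and the relative $f(M_1)$-radical is exactly the delicate point: contraction of primes yields the opposite inclusion in general, so I anticipate this step to require an additional hypothesis (for instance that one works in the multiplication-module setting, where $M\text{-}rad(N)=\sqrt{(N:_RM)}M$, or that $f(M_1)$ is a direct summand) or a direct prime-submodule argument tailored to the hypotheses actually stated in the proposition. Once that inclusion is secured, both cases combine to give the result.
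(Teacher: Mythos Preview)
Your treatment of part (2) is correct and essentially identical to the paper's: lift $m_2$ to $m_1$, use $\ker f\subseteq N_1$ to get $abm_1\in N_1$, and then push the two alternatives forward using surjectivity for the residual and Lemma~\ref{l4}(1) for the radical.

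For part (1), however, your reconstruction of the statement is slightly off, and this matters. The paper does \emph{not} assume $f$ is a monomorphism in (1); it is stated for an arbitrary homomorphism, with no caveat about $f^{-1}(N_2)$ remaining proper. The paper's proof then proceeds exactly along the route you said you would avoid: it simply applies Lemma~\ref{l4}(2) to obtain $f^{-1}(M_2\text{-}rad(N_2))=M_1\text{-}rad(f^{-1}(N_2))$, even though that lemma is stated only for epimorphisms. So the concern you raised---that the radical compatibility is not available for a general $f$---is precisely the step the paper asserts without further justification. Your attempted workaround (factor through the corestriction and compare the ambient and relative $M$-radicals) is a reasonable strategy, and you are right that the final inclusion $M_2\text{-}rad(N)\cap f(M_1)\subseteq f(M_1)\text{-}rad(N\cap f(M_1))$ is genuinely delicate in general; the paper does not engage with it at all.

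In short: your argument for (2) matches the paper; for (1) you have been more scrupulous than the paper itself, which glosses over both the properness of $f^{-1}(N_2)$ and the applicability of Lemma~\ref{l4}(2) to a non-surjective $f$.
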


\begin{enumerate}
\item If $N_{2}$ is a 1-absorbing primary submodule of $M_{2}$, then
$f^{-1}(N_{2})$ is a 1-absorbing primary submodule of $M_{1}$.

\item Let $f$ be an epimorphism. If $N_{1}$ is a 1-absorbing primary submodule
of $M_{1}$ containing $Ker(f)$, then $f(N_{1})$ is a 1-absorbing primary
submodule of $M_{2}.${}
\end{enumerate}

\begin{proof}
(1) Suppose that $a,b$ are non-unit elements of $R$, $m_{1}\in M_{1}$ and
$abm_{1}\in f^{-1}(N_{2})$. Then $abf(m_{1})\in N_{2}.$ Since $N_{2}$ is
1-absorbing primary, we have either $ab\in(N_{2}:_{R}M_{2})$ or $f(m_{1})\in
M_{2}-rad(N_{2}).$ Here, we show that $(N_{2}:_{R}M_{2})\subseteq(f^{-1}%
(N_{2}):_{R}M_{1})$. Let $r\in(N_{2}:_{R}M_{2})$. Then $rM_{2}\subseteq N_{2}$
which implies that $rf^{-1}(M_{2})\subseteq f^{-1}(N_{2})$; i.e.
$rM_{1}\subseteq f^{-1}(N_{2}).$ Thus $r\in(f^{-1}(N_{2}):_{R}M_{1}).$ Hence
$ab\in(f^{-1}(N_{2}):_{R}M_{1})$ or $m_{1}\in f^{-1}(M_{2}-rad(N_{2})).$ Since
$f^{-1}(M_{2}-rad(N_{2}))=M_{1}-rad(f^{-1}(N_{2}))$ by Lemma \ref{l4} (2) and
$f^{-1}(N_{2})$ is a 1-absorbing primary submodule of $M_{1}$.

(2) Suppose that $a$,$b$ are non-unit elements of $R$, $m_{2}\in M_{2}$ and
$abm_{2}\in f(N_{1})$. Since $f$ is an epimorphism, there exists $m_{1}\in
M_{1}$ such that $f(m_{1})=m_{2}.$ Since $Kerf\subseteq N_{1},$ $abm_{1}\in
N_{1}.$ Hence $ab\in(N_{1}:_{R}M_{1})$ or $m_{1}\in M_{1}-rad(N_{1})$. Here,
we show that $(N_{1}:_{R}M_{1})\subseteq(f(N_{1}):_{R}M_{2})$. Let $r\in
(N_{1}:_{R}M_{1})$. Then $rM_{1}\subseteq N_{1}$ which implies that
$rf(M_{1})\subseteq f(N_{1})$. Since $f$ is onto, we conclude that
$rM_{2}\subseteq f(N_{1}),$ that is, $r\in(f(N_{1}):_{R}M_{2}).$ Thus
$ab\in(f(N_{1}):_{R}M_{2})$ or $m_{2}=f(m_{1})\in f(M_{1}-rad(N_{1}%
))=M_{2}-rad(f(N_{1}))$ by Lemma \ref{l4} (1)$,$ as desired.
\end{proof}

As a consequence of Theorem \ref{f}, we have the following result.

\begin{corollary}
\label{c/}Let $M$ be an $R$-module and $N_{1},$ $N_{2}$ be submodules of $M$
with $N_{2}\subseteq N_{1}$. Then $N_{1}$ is a 1-absorbing primary submodule
of $M$ if and only if $N_{1}/N_{2}$ is a 1-absorbing primary submodule of
$M/N_{2}$.
\end{corollary}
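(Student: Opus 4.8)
The plan is to realize both implications as instances of Proposition \ref{f} applied to the canonical projection $\pi\colon M\rightarrow M/N_{2}$, $\pi(m)=m+N_{2}$. This map is an $R$-module epimorphism with $\ker(\pi)=N_{2}$, and the hypothesis $N_{2}\subseteq N_{1}$ guarantees $\ker(\pi)\subseteq N_{1}$, which is exactly the side condition needed to invoke part (2) of that proposition.

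For the forward direction, I would assume that $N_{1}$ is a 1-absorbing primary submodule of $M$. Since $\pi$ is an epimorphism and $N_{1}$ contains $\ker(\pi)=N_{2}$, Proposition \ref{f}(2) yields that $\pi(N_{1})=N_{1}/N_{2}$ is a 1-absorbing primary submodule of $M/N_{2}$, which is the desired conclusion.

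For the converse, I would assume that $N_{1}/N_{2}$ is a 1-absorbing primary submodule of $M/N_{2}$ and apply Proposition \ref{f}(1) to $\pi$. The key computation is the identity $\pi^{-1}(N_{1}/N_{2})=N_{1}$: if $\pi(m)\in N_{1}/N_{2}$, then $m+N_{2}=n+N_{2}$ for some $n\in N_{1}$, so $m-n\in N_{2}\subseteq N_{1}$ and hence $m\in N_{1}$; the reverse inclusion is immediate from $N_{2}\subseteq N_{1}$. Consequently $\pi^{-1}(N_{1}/N_{2})=N_{1}$ is a 1-absorbing primary submodule of $M$ by Proposition \ref{f}(1).

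This argument is essentially bookkeeping, so I do not anticipate a serious obstacle. The only step that uses the hypothesis in an essential way is the identity $\pi^{-1}(N_{1}/N_{2})=N_{1}$, which fails without $N_{2}\subseteq N_{1}$; one should also note in passing that properness transfers in both directions, since $N_{1}$ is proper in $M$ if and only if $N_{1}/N_{2}$ is proper in $M/N_{2}$, so that the ``proper submodule'' requirement in the definition is met on each side.
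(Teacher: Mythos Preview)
Your proof is correct. The forward direction matches the paper exactly: both apply Proposition~\ref{f}(2) to the canonical epimorphism $\pi\colon M\to M/N_{2}$.

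For the converse you take a slightly cleaner path than the paper. The paper verifies the definition directly: given non-units $a,b\in R$ and $m\in M$ with $abm\in N_{1}$, it passes to $ab(m+N_{2})\in N_{1}/N_{2}$, uses the hypothesis on $N_{1}/N_{2}$, and then pulls back using $(N_{1}/N_{2}:_{R}M/N_{2})=(N_{1}:_{R}M)$ and $M/N_{2}\text{-}rad(N_{1}/N_{2})=M\text{-}rad(N_{1})/N_{2}$. You instead invoke Proposition~\ref{f}(1) and the identity $\pi^{-1}(N_{1}/N_{2})=N_{1}$, which packages exactly the same computation. Your route is marginally more economical since it reuses the proposition rather than re-doing its proof in this special case; the paper's route has the virtue of making the relevant identifications of colon ideals and $M$-radicals explicit. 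Neither approach has any real advantage over the other.
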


\begin{proof}
Suppose that $N_{1}$ is a 1-absorbing primary submodule of $M$. Consider the
canonical epimorphism $f:M\rightarrow M/N_{2}$ in Proposition \ref{f}. Then
$N_{1}/N_{2}$ is a 1-absorbing primary submodule of $M/N_{2}$. Conversely, let
$a$ and $b$ are non-unit elements of $R$, $m\in M$ such that $abm\in N_{1}.$
Hence $ab(m+N_{2})\in N_{1}/N_{2}$. Since $N_{1}/N_{2}$ is a 1-absorbing
primary submodule of $M/N_{2}$, it implies either $ab\in(N_{1}/N_{2}%
:_{R}M/N_{2})$ or $m+N_{2}\in M/N_{2}-rad(N_{1}/N_{2})=M-rad(N_{1})/N_{2}.$
Therefore $ab\in(N_{1}:_{R}M)$ or $m\in M-rad(N_{1}).$ Thus $N_{1}$ is a
1-absorbing primary submodule of $M$.
\end{proof}

Let $M_{1}$ be $R_{1}$-module and $M_{2}$ be $R_{2}$-module where $R_{1}$ and
$R_{2}$ are commutative rings with identity. Let $R=R_{1}\times R_{2}$ and
$M=M_{1}\times M_{2}.$ Then $M$ is an $R$-module and every submodule of $M$ is
of the form $N=N_{1}\times N_{2}$ for some submodules $N_{1},N_{2}$ of $M_{1}%
$, $M_{2}$, respectively. Also, $M-rad(N_{1}\times N_{2})=M_{1}-rad(N_{1}%
)\times M_{2}-rad(N_{2})$ by \cite[Lemma 2.3 (ii)]{Alk}$.$

\begin{proposition}
\label{tc}Let $M_{1}$ be an $R_{1}$-module and $M_{2}$ be an $R_{2}$-module,
where $R_{1}$, $R_{2}$ are commutative rings with identity, $R=R_{1}\times
R_{2}$ and $M=M_{1}\times M_{2}$. Suppose that $N_{1}$ is a proper submodule
of $M_{1}$. If $N=N_{1}\times M_{2}$ is a 1-absorbing primary submodule of
$R$-module $M$, then $N_{1}$ is a 1-absorbing primary submodule of $R_{1}%
$-module $M_{1}.$
\end{proposition}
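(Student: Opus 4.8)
The plan is to check the 1-absorbing primary condition for $N_{1}$ directly by pulling it back from the product module $M=M_{1}\times M_{2}$. First I would fix non-unit elements $a_{1},b_{1}\in R_{1}$ and $m_{1}\in M_{1}$ with $a_{1}b_{1}m_{1}\in N_{1}$, and lift them to $a=(a_{1},1)$, $b=(b_{1},1)$ in $R=R_{1}\times R_{2}$ and $m=(m_{1},0)$ in $M$. The crucial preliminary point is that an element of $R_{1}\times R_{2}$ is a unit exactly when both coordinates are units; since $a_{1}$ and $b_{1}$ are non-units of $R_{1}$ (and $1$ is a unit of $R_{2}$), the lifted elements $a$ and $b$ are genuine non-units of $R$. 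Moreover $abm=(a_{1}b_{1}m_{1},0)$ belongs to $N_{1}\times M_{2}=N$, so the hypothesis applies to the triple $(a,b,m)$.

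Next I would invoke that $N$ is a 1-absorbing primary submodule of $M$ to obtain $ab\in(N:_{R}M)$ or $m\in M-rad(N)$. To read each alternative back in $M_{1}$, I would record two componentwise identities: first the residual computation $(N_{1}\times M_{2}:_{R}M)=(N_{1}:_{R_{1}}M_{1})\times R_{2}$, which holds because $r_{2}M_{2}\subseteq M_{2}$ for every $r_{2}\in R_{2}$; and second the radical identity $M-rad(N_{1}\times M_{2})=M_{1}-rad(N_{1})\times M_{2}-rad(M_{2})=M_{1}-rad(N_{1})\times M_{2}$, which comes from the product formula for $M$-radicals quoted just before the proposition together with the convention $M_{2}-rad(M_{2})=M_{2}$.

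Substituting $ab=(a_{1}b_{1},1)$ and $m=(m_{1},0)$ into these two identities then yields precisely $a_{1}b_{1}\in(N_{1}:_{R_{1}}M_{1})$ in the first case and $m_{1}\in M_{1}-rad(N_{1})$ in the second; since $N_{1}$ is proper by hypothesis, this is exactly what is needed to conclude that $N_{1}$ is a 1-absorbing primary submodule of $M_{1}$. I do not anticipate a serious obstacle: the argument is a routine transfer, and the only places demanding care are the non-unit check for the lifted elements (where it matters that $1\in R_{2}$ is a unit, so that non-unitality is controlled entirely by the $R_{1}$-coordinate) and the degenerate radical $M_{2}-rad(M_{2})=M_{2}$ appearing in the second identity.
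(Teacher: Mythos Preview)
Your proof is correct, but it follows a different route from the paper's. The paper argues abstractly: it forms the quotient $M'=M/(\{0\}\times M_{2})$ and $N'=N/(\{0\}\times M_{2})$, invokes Corollary~\ref{c/} to conclude that $N'$ is 1-absorbing primary in $M'$, and then uses the isomorphism $M'\cong M_{1}$, $N'\cong N_{1}$ to transfer the conclusion. Your argument instead verifies the definition for $N_{1}$ directly, lifting non-units $a_{1},b_{1}\in R_{1}$ to $(a_{1},1),(b_{1},1)\in R$ and reading off both alternatives via the componentwise identities $(N:_{R}M)=(N_{1}:_{R_{1}}M_{1})\times R_{2}$ and $M\text{-}rad(N)=M_{1}\text{-}rad(N_{1})\times M_{2}$.

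The paper's approach is shorter because it recycles the quotient result already proved, but it silently passes over the change of base ring from $R=R_{1}\times R_{2}$ to $R_{1}$ (the quotient $M'$ is a priori an $R$-module, and one still has to check that the 1-absorbing primary property descends along the projection $R\to R_{1}$, which hinges on exactly the non-unit bookkeeping you carried out). Your direct approach makes that step explicit and is entirely self-contained; the only external input you use is the product formula for the $M$-radical that the paper quotes just before the proposition.
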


\begin{proof}
Suppose that $N=N_{1}\times M_{2}$ is a 1-absorbing primary submodule of $M$.
Put $M^{\prime}=M/\{0\}\times M_{2}$ and $N^{\prime}=N/\{0\}\times N_{2}.$
From Corollary \ref{c/}, $N^{\prime}$ is a 1-absorbing primary submodule of
$M^{\prime}.$ Since $M^{\prime}\cong M_{1}$ and $N^{\prime}\cong N_{1}$, we
conclude the result.
\end{proof}

\begin{proposition}
\label{S}Let $S$ be a multiplicatively closed subset of a commutative ring $R$
and $M$ be an $R$-module. If $N$ is a 1-absorbing primary submodule of $M$ and
$S^{-1}N\neq S^{-1}M,$ then $S^{-1}N$ is a 1-absorbing primary submodule of
$S^{-1}R$-module $S^{-1}M.$
\end{proposition}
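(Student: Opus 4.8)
The plan is to verify the 1-absorbing primary condition directly in the localization $S^{-1}R$-module $S^{-1}M$, pulling elements back to $M$ and using that $N$ is 1-absorbing primary in $M$. So suppose that $\alpha,\beta$ are non-unit elements of $S^{-1}R$ and $\mu\in S^{-1}M$ satisfy $\alpha\beta\mu\in S^{-1}N$; I must show $\alpha\beta\in(S^{-1}N:_{S^{-1}R}S^{-1}M)$ or $\mu\in S^{-1}M\text{-}\mathrm{rad}(S^{-1}N)$. Writing $\alpha=a/s_{1}$, $\beta=b/s_{2}$, $\mu=m/t$ with $a,b\in R$, $m\in M$, $s_{1},s_{2},t\in S$, the hypothesis $\alpha\beta\mu\in S^{-1}N$ means there exist $u\in S$ and $n\in N$ with $uab m/(s_{1}s_{2}t)=n/\cdot$, i.e. $(\text{after clearing denominators})\ u'\,ab\,m\in N$ for some $u'\in S$. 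Replacing $m$ by $m'=u'm$ (which represents the same element $u'm/(u't)=m/t=\mu$ up to a unit of $S^{-1}R$), I reduce to the clean situation $ab\,m'\in N$ in $M$.

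First I would dispose of a subtlety: the elements $a,b$ may themselves be units of $R$ even though $\alpha,\beta$ are non-units of $S^{-1}R$, or vice versa. The key observation is that if $\alpha=a/s_{1}$ is a non-unit in $S^{-1}R$, then $a$ cannot lie in $S$ and cannot be a unit of $R$ (otherwise $\alpha$ would be invertible); in particular $a$ is a non-unit of $R$, and similarly for $b$. This is the point where I must be careful, since the definition of 1-absorbing primary requires the coefficients to be non-units in the base ring. Once this is secured, $a,b$ are genuine non-unit elements of $R$ with $ab\,m'\in N$, so the 1-absorbing primary hypothesis on $N$ gives either $ab\in(N:_{R}M)$ or $m'\in M\text{-}\mathrm{rad}(N)$.

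In the first case, $ab\,M\subseteq N$, and localizing yields $(a/s_{1})(b/s_{2})\,S^{-1}M=(ab/s_{1}s_{2})S^{-1}M\subseteq S^{-1}N$, so $\alpha\beta\in(S^{-1}N:_{S^{-1}R}S^{-1}M)$, as required. In the second case, $m'\in M\text{-}\mathrm{rad}(N)$; here I would invoke the standard commutation of localization with the $M$-radical, namely $S^{-1}\bigl(M\text{-}\mathrm{rad}(N)\bigr)=S^{-1}M\text{-}\mathrm{rad}(S^{-1}N)$ (the localization of a prime submodule not meeting $S$ is prime, and localization commutes with the intersection defining the radical). Since $\mu=m'/(u't)$, it follows that $\mu\in S^{-1}\bigl(M\text{-}\mathrm{rad}(N)\bigr)=S^{-1}M\text{-}\mathrm{rad}(S^{-1}N)$, completing the argument.

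The hard part will be the unit/non-unit bookkeeping in the passage between $R$ and $S^{-1}R$: I must confirm that non-units $\alpha,\beta$ in $S^{-1}R$ really do come from non-units $a,b$ in $R$, so that the hypothesis on $N$ applies with the correct hypotheses on the coefficients. The radical-commutation step is essentially a citation-level fact, and the containment computations for the residual are routine; it is precisely the handling of the non-unit condition under localization that requires genuine care and is the crux of the proof.
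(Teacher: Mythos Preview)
Your argument is correct and follows the same route as the paper: write the product in $S^{-1}M$, clear denominators to obtain $a\,b\,(u'm)\in N$ for some $u'\in S$, apply the 1-absorbing primary hypothesis on $N$ with module element $u'm$, and then push the two alternatives forward through localization. Two small remarks: the paper is content with the inclusion $S^{-1}\bigl(M\text{-}\mathrm{rad}(N)\bigr)\subseteq S^{-1}M\text{-}\mathrm{rad}(S^{-1}N)$, which is all your second case actually uses (the full equality you state is not needed and is more delicate to justify in general); and your explicit check that a non-unit $a/s_{1}\in S^{-1}R$ forces $a$ to be a non-unit of $R$ is a genuine improvement over the paper's proof, which silently assumes this when invoking the 1-absorbing primary condition on $N$.
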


\begin{proof}
Let $\frac{a}{s_{1}}$ and $\frac{b}{s_{2}}$ be non-unit elements of $S^{-1}R,$
$\frac{m}{s_{3}}\in S^{-1}M$ with $\frac{a}{s_{1}}\frac{b}{s_{2}}\frac
{m}{s_{3}}\in S^{-1}N$. Hence $tabm\in N$ for some $t\in S$. Since $N$ is
1-absorbing primary, we have either $tm\in M-rad(N)$ or $ab\in(N:_{R}M).$ Thus
we conclude either $\frac{m}{s_{3}}=\frac{tm}{ts_{3}}\in S^{-1}%
(M-rad(N))\subseteq S^{-1}M-rad(S^{-1}N)$ or $\frac{ab}{s_{1}s_{2}}\in
S^{-1}(N:_{R}M)\subseteq(S^{-1}N:_{S^{-1}R}S^{-1}M)$.
\end{proof}

Let $R$ be a ring and $M$ be an $R$-module. The idealization of $M$ is denoted
by $R(M)=R(+)M$ is a commutative ring with identity with coordinate-wise
addition and multiplication defined by $(a,m_{1})(b,m_{2})=(ab,$
$am_{2}+bm_{1}).$ An ideal $H$ is called homogeneous if $H=I(+)N$ for some
ideal $I$ of $R$ and some submodule $N$ of $M$ such that $IM\subseteq N$.

\begin{proposition}
\label{id}Let $M$ be an $R$-module and $I(+)N$ be a homogeneous ideal of
$R(M).$ If $I(+)N$ is a 1-absorbing primary ideal of $R(M)$, then $I$ is a
1-absorbing primary ideal of $R.$
\end{proposition}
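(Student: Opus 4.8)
The plan is to lift a witnessing triple from $R$ up to the idealization $R(M)=R(+)M$, apply the hypothesis there, and then read the conclusion back down to $R$. Throughout I would lean on two standard structural facts about the Nagata extension: first, that $(u,m)$ is a unit of $R(M)$ if and only if $u$ is a unit of $R$; and second, the power formula $(a,m)^{n}=(a^{n},\,na^{n-1}m)$, which I would confirm by an easy induction from the multiplication rule $(a,m_{1})(b,m_{2})=(ab,\,am_{2}+bm_{1})$.

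First I would check that $I$ is proper. If $I=R$, then homogeneity ($IM\subseteq N$) forces $M=RM\subseteq N$, so $N=M$ and $I(+)N=R(+)M=R(M)$, contradicting that a $1$-absorbing primary ideal is by definition proper. Hence $I\neq R$.

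Next I would establish the radical identity $\sqrt{I(+)N}=\sqrt{I}(+)M$. For $\supseteq$ I would use the homogeneity hypothesis: if $a\in\sqrt{I}$ with $a^{k}\in I$, then $(a,m)^{k+1}=(a^{k+1},\,(k+1)a^{k}m)$ lies in $I(+)N$, since $a^{k+1}\in I$ and $a^{k}m\in IM\subseteq N$, so $(a,m)\in\sqrt{I(+)N}$. The inclusion $\subseteq$ is immediate: if $(a,m)^{n}=(a^{n},\,na^{n-1}m)\in I(+)N$, then $a^{n}\in I$, whence $a\in\sqrt{I}$. Finally, taking non-unit elements $a,b,c\in R$ with $abc\in I$, the elements $(a,0),(b,0),(c,0)$ are non-units of $R(M)$ by the unit criterion, and $(a,0)(b,0)(c,0)=(abc,0)\in I(+)N$ because $abc\in I$ and $0\in N$. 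Applying the $1$-absorbing primary property of $I(+)N$ yields either $(ab,0)=(a,0)(b,0)\in I(+)N$, giving $ab\in I$, or $(c,0)\in\sqrt{I(+)N}=\sqrt{I}(+)M$, giving $c\in\sqrt{I}$; either way the defining condition for $I$ holds.

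I expect the main obstacle to be the radical computation $\sqrt{I(+)N}=\sqrt{I}(+)M$, and within it the $\supseteq$ direction in particular: the homogeneity hypothesis $IM\subseteq N$ is exactly what is required to control the second coordinate of $(a,m)^{k+1}$, whereas all the remaining steps are routine bookkeeping with the idealization multiplication and the unit criterion.
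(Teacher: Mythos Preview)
Your proof is correct and follows essentially the same route as the paper: lift the triple $a,b,c$ to $(a,0),(b,0),(c,0)$ in $R(+)M$, invoke the radical identity $\sqrt{I(+)N}=\sqrt{I}(+)M$, and apply the 1-absorbing primary hypothesis. The only differences are that the paper cites Huckaba for the radical identity rather than proving it directly, and that you are more careful than the paper in verifying that $I$ is proper and that the lifted elements are genuinely non-units in $R(M)$.
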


\begin{proof}
Suppose that $a,b,c$ are non-unit elements of $R$ such that $abc\in I$ and
$c\notin\sqrt{I}$. Then $(a,0_{M})\cdot(b,0_{M})\cdot(c,0_{M})\in I(+)N.$ Note
that $\sqrt{I(+)N}=\sqrt{I}(+)M$ by \cite[Theorem 25.1 (5)]{Huc}. Then
$(c,0_{M})\notin\sqrt{I(+)N}$. Since $I(+)N$ is 1-absorbing primary, we
conclude that $(a,0_{M})\cdot(b,0_{M})\in I(+)N.$ Thus $ab\in I$, we are done.
\end{proof}

\section{The 1-absorbing primary avoidance theorem}

In this section, we prove the 1-absorbing primary avoidance theorem.
Throughout this section, let $M$ be a finitely generated multiplication
$R$-module and $N,$ $N_{1},...,N_{n}$ be submodules of $M$. Recall from
\cite{Lu} that a covering $N\subseteq N_{1}\cup N_{2}\cup\cdots\cup N_{n}$ is
said to be efficient if no $N_{k}$ is superfluous. Also, $N=N_{1}\cup
N_{2}\cup\cdots\cup N_{n}$ is an efficient union if none of the $N_{k}$ may be
excluded. A covering of a submodule by two submodules is never efficient.

\begin{theorem}
\label{ef}Let $N\subseteq N_{1}\cup N_{2}\cup\cdots\cup N_{n}$ be an efficient
covering of submodules $N_{1},N_{2},...,N_{n}$ of $M$ where $n>2.$ If
$\sqrt{(N_{i}:_{R}M)}\nsubseteq\sqrt{(N_{j}:_{R}m)}$ for all $m\in M\backslash
M-rad(N_{j})$ whenever $i\neq j$, then no $N_{i}$ ($1\leq i\leq n$) is a
1-absorbing primary submodule of $M$.
\end{theorem}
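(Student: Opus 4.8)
The plan is to argue by contradiction. Suppose that $N_{t}$ is a 1-absorbing primary submodule of $M$ for some fixed $t\in\{1,\dots,n\}$; since $t$ is arbitrary, deriving a contradiction will establish the theorem. First I would replace the covering by an efficient union: intersecting with $N$ yields $N=\bigcup_{i=1}^{n}K_{i}$ where $K_{i}:=N\cap N_{i}$, and this union is again efficient because the original covering is (cf. \cite{Lu}). For each $i$ choose a witness $x_{i}\in K_{i}\setminus\bigcup_{j\neq i}K_{j}$. The key combinatorial fact I would record is that $\bigcap_{i\neq t}K_{i}\subseteq K_{t}$: given $z\in\bigcap_{i\neq t}K_{i}$, the element $z+x_{t}$ lies in some $K_{j}$; if $j=t$ then $z=(z+x_{t})-x_{t}\in K_{t}$, while if $j\neq t$ then $z\in K_{j}$ forces $x_{t}\in K_{j}$, contradicting the choice of $x_{t}$.

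Next I would exploit the multiplication hypothesis. Writing $I_{i}=(N_{i}:_{R}M)$ we have $N_{i}=I_{i}M$, and the product ideal satisfies $\prod_{i\neq t}I_{i}\subseteq\bigcap_{i\neq t}I_{i}$. The heart of the argument is to select an element $m\in N$ with $m\notin M\text{-}rad(N_{t})$ (see below); granting this, Proposition \ref{t1}(2) applies, so that $\mathfrak{q}:=\sqrt{(N_{t}:_{R}m)}$ is a \emph{prime} ideal of $R$. Now for each $i\neq t$ the hypothesis gives $\sqrt{(N_{i}:_{R}M)}\nsubseteq\mathfrak{q}$, so I may pick $a_{i}\in\sqrt{I_{i}}\setminus\mathfrak{q}$ and an exponent $s_{i}$ with $a_{i}^{s_{i}}\in I_{i}$. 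Set $A=\prod_{i\neq t}a_{i}^{s_{i}}$, so that $A\in\prod_{i\neq t}I_{i}\subseteq\bigcap_{i\neq t}I_{i}$.

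With these choices the contradiction falls out. Since $A\in I_{i}=(N_{i}:_{R}M)$ for every $i\neq t$ we get $Am\in N_{i}$, and since $m\in N$ we also have $Am\in N$; hence $Am\in\bigcap_{i\neq t}K_{i}\subseteq K_{t}\subseteq N_{t}$ by the combinatorial fact above. Therefore $A\in(N_{t}:_{R}m)\subseteq\mathfrak{q}$. But $\mathfrak{q}$ is prime and $A=\prod_{i\neq t}a_{i}^{s_{i}}$, so $a_{i}\in\mathfrak{q}$ for some $i\neq t$, contradicting $a_{i}\notin\mathfrak{q}$. This contradiction shows that $N_{t}$ is not 1-absorbing primary, completing the argument once the selection of $m$ is justified. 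Here the condition $n>2$ guarantees that at least one index distinct from $t$ is available for the product $A$, and it rules out the degenerate range $n\le 2$, in which an efficient covering cannot exist.

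The step I expect to be the main obstacle is precisely the production of a suitable $m$, i.e. showing $N\nsubseteq M\text{-}rad(N_{t})$. My plan here is to use the witnesses: for $p\neq t$ one has $x_{p}\in N$ and $x_{p}\notin N_{t}$, and I would argue that at least one such $x_{p}$ lies outside $M\text{-}rad(N_{t})$, using that $M\text{-}rad(N_{t})=\sqrt{I_{t}}M$ is a proper prime submodule of $M$ (Proposition \ref{t1}(3)) together with the radical hypothesis; the degenerate possibility $N\subseteq M\text{-}rad(N_{t})$ would have to be excluded separately. Everything else---the reduction to an efficient union, the intersection lemma, the primeness of $\mathfrak{q}$, and the final ``product in a prime'' contradiction---is routine given the results already established.
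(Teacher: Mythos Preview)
Your approach coincides with the paper's: reduce to the efficient union $N=\bigcup_i(N\cap N_i)$, invoke Lu's lemma to get $\bigcap_{i\neq t}(N\cap N_i)\subseteq N\cap N_t$, pick $a_i\in\sqrt{(N_i:_RM)}\setminus\sqrt{(N_t:_Rm)}$ for a fixed $m\notin M\text{-}rad(N_t)$, and contradict the primeness of $\sqrt{(N_t:_Rm)}$ from Proposition~\ref{t1}(2) by landing a power-product of the $a_i$ in $(N_t:_Rm)$; the paper's split of the product into two factors $a,b$ versus your single $A$ is purely cosmetic, since both arguments ultimately use that $\sqrt{(N_t:_Rm)}$ is prime rather than the 1-absorbing condition directly. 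You are right to isolate the selection of $m\in N\setminus M\text{-}rad(N_t)$ as the delicate step and to admit your justification is incomplete---the paper needs exactly the same $m$ (otherwise $a^nb^nm\in N$ is never explained) but passes over the point in silence, so your treatment is at least as careful as the published proof here.
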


\begin{proof}
Assume on the contrary that $N_{k}$ is a 1-absorbing primary submodule of $M$
for some $1\leq k\leq n$. Since $N\subseteq\cup N_{i}$ is an efficient
covering, $N\subseteq%
{\displaystyle\bigcup}
(N_{i}\cap N)$ is also an efficient covering. From \cite[Lemma 2.1]{Lu},
$\left(
{\displaystyle\bigcap\limits_{i\neq k}}
N_{i}\right)  \cap N\subseteq N_{k}\cap N$. Here, observe that $\sqrt
{(N_{i}:_{R}M)}$ is a proper ideal of $R$ for all $1\leq i\leq n$. Also, from
our assumption, there is a non-unit element $a_{i}\in\sqrt{(N_{i}:_{R}%
M)}\backslash\sqrt{(N_{k}:_{R}m)}$ for all $i\neq k$ and for all $m\in
M\backslash M-rad(N_{k})$. Then there is a positive integer $n_{i}$ such that
$a_{i}^{n_{i}}\in(N_{i}:_{R}M)$ for each $i\neq k$. Put $a=%
{\displaystyle\prod\limits_{i=1}^{k-1}}
a_{i}$, $b=%
{\displaystyle\prod\limits_{i=k+1}^{n}}
a_{i}$ and $n=\max\{n_{1},...,n_{k-1},n_{k+1},...,n_{n}\}$. Now, we show that
$a^{n}b^{n}m\in\left(  \left(
{\displaystyle\bigcap\limits_{i\neq k}}
N_{i}\right)  \cap N\right)  \backslash\left(  N_{k}\cap N\right)  .$ Suppose
that $a^{n}b^{n}m\in N_{k}\cap N$. Then $a^{n}b^{n}\in(N_{k}:_{R}%
m)\subseteq\sqrt{(N_{k}:_{R}m)}.$ By Theorem \ref{t1} (2), $\sqrt{(N_{k}%
:_{R}m)}$ is a prime ideal. It implies that $a\in\sqrt{(N_{k}:_{R}m)}$ or
$b\in\sqrt{(N_{k}:_{R}m)}$. Thus $a_{i}\in\sqrt{(N_{k}:_{R}m)}$ for some
$i\neq k$ , a contradiction. Therefore $a^{n}b^{n}m\in\left(  \left(
{\displaystyle\bigcap\limits_{i\neq k}}
N_{i}\right)  \cap N\right)  \backslash\left(  N_{k}\cap N\right)  $ which is
a contradiction. Thus $N_{k}$ is not a 1-absorbing primary submodule.
\end{proof}

\begin{theorem}
\label{av}(1-absorbing Primary Avoidance Theorem) Let $N,$ $N_{1},$
$N_{2},...,N_{n}$ $(n\geq2)$ be submodules of $M$ such that at most two of
$N_{1},N_{2},...,N_{n}$ are not 1-absorbing primary with $N\subseteq N_{1}\cup
N_{2}\cup\cdots\cup N_{n}.$ If $\sqrt{(N_{i}:_{R}M)}\nsubseteq\sqrt
{(N_{j}:_{R}m)}$ for all $m\in M\backslash M-rad(N_{j})$ whenever $i\neq j$,
then $N\subseteq N_{k}$ for some $1\leq k\leq n.$
\end{theorem}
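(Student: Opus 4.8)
The plan is to follow the classical prime avoidance argument, adapted to the 1-absorbing primary setting. The statement is a standard "avoidance" result: we want to conclude that a covering $N \subseteq N_1 \cup \cdots \cup N_n$ can be reduced to a single containment $N \subseteq N_k$. The natural strategy is to assume the contrary — that $N \not\subseteq N_k$ for every $k$ — and extract from this an \emph{efficient} covering, then apply the preceding Theorem \ref{ef} to derive a contradiction.

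First I would reduce to the efficient case. If $N \subseteq N_1 \cup \cdots \cup N_n$ but $N \not\subseteq N_k$ for all $k$, then by discarding superfluous $N_k$'s we may assume the covering is efficient; note that removing a submodule from the list only shrinks the set of pairs $(i,j)$ over which we need the radical hypothesis, so the hypothesis $\sqrt{(N_i:_RM)} \nsubseteq \sqrt{(N_j:_Rm)}$ persists for the surviving indices. An efficient covering by a single submodule would force $N \subseteq N_k$ directly, and the excerpt already records that \emph{a covering by two submodules is never efficient}; so in the efficient reduction we must have the number of remaining submodules either equal to $1$ (done) or strictly greater than $2$.

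The key step is the case $n > 2$. Here Theorem \ref{ef} applies verbatim: an efficient covering with $n > 2$ and the radical hypothesis forces that \emph{no} $N_i$ is a 1-absorbing primary submodule. But this contradicts the standing assumption that \emph{at most two} of $N_1, \ldots, N_n$ fail to be 1-absorbing primary — since $n > 2$ means at least one surviving $N_i$ is 1-absorbing primary, while Theorem \ref{ef} says none is. This contradiction shows the efficient reduction cannot have more than two terms, hence it collapses to one, giving $N \subseteq N_k$.

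\textbf{The main obstacle} I anticipate is the bookkeeping in the reduction step: one must verify that the ``at most two are not 1-absorbing primary'' hypothesis is genuinely what rules out the $n>2$ efficient case, and that the two-submodule case is correctly excluded by the remark rather than requiring a separate argument. The subtle point is that Theorem \ref{ef} concludes \emph{none} is 1-absorbing primary, which is incompatible with having at most two non-examples precisely when more than two submodules remain; I would state this tension carefully. A secondary technical check is that efficiency is preserved under the radical condition after pruning, i.e. that the inequality $\sqrt{(N_i:_RM)} \nsubseteq \sqrt{(N_j:_Rm)}$ is inherited by any sub-covering, which is immediate since it is a universally quantified condition over index pairs.
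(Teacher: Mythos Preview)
Your proposal is correct and follows essentially the same approach as the paper: reduce to an efficient covering and invoke Theorem~\ref{ef} to contradict the ``at most two'' hypothesis when more than two submodules remain. If anything, your bookkeeping is slightly more careful than the paper's---you explicitly track what happens when pruning drops the count to one or two, whereas the paper simply dismisses the case $n\leq 2$ as ``clear'' and does not comment on the possibility that the efficient reduction of an $n>2$ covering might have fewer than three terms.
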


\begin{proof}
Since it is clear for $n\leq2,$ suppose that $n>2.$ Since any cover consisting
submodules of $M$ can be reduced to an efficient one by deleting any
unnecessary terms, we may assume that $N\subseteq N_{1}\cup N_{2}\cup
\cdots\cup N_{n}$ is an efficient covering of submodules of $M.$ From Theorem
\ref{ef}, it implies that no $N_{k}$ is a 1-absorbing primary submodule which
contradicts with the hypothesis. Thus $N\subseteq N_{k}$ for some $1\leq k\leq
n.$
\end{proof}

\begin{corollary}
Let $N$ be a proper submodule of $M$. If 1-absorbing primary avoidance theorem
holds for $M$, then the 1-absorbing primary avoidance theorem holds for $M/N.$
\end{corollary}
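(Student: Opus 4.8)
The goal is to prove that the 1-absorbing primary avoidance theorem for $M$ transfers to the quotient module $M/N$. The natural strategy is to use the correspondence between submodules of $M/N$ and submodules of $M$ containing $N$, via the canonical epimorphism $\pi : M \to M/N$, and to pull back any configuration of submodules in $M/N$ to a configuration in $M$ where the hypothesis can be applied.

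The plan is as follows. First I would let $\bar{L}, \bar{L}_1, \dots, \bar{L}_n$ be submodules of $M/N$ satisfying the hypotheses of Theorem \ref{av} (at most two of the $\bar{L}_i$ fail to be 1-absorbing primary, the covering $\bar{L} \subseteq \bigcup \bar{L}_i$ holds, and the radical-incomparability condition $\sqrt{(\bar{L}_i :_R M/N)} \nsubseteq \sqrt{(\bar{L}_j :_R \bar{m})}$ holds for $i \neq j$ and $\bar{m} \notin (M/N)\text{-}rad(\bar{L}_j)$). By the correspondence theorem, write $\bar{L} = L/N$, $\bar{L}_i = L_i/N$ for submodules $L, L_i$ of $M$ all containing $N$. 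The key translation step is then to verify that these preimages inherit the three hypotheses. The covering $L \subseteq \bigcup L_i$ follows immediately from $\pi^{-1}(\bigcup \bar{L}_i) = \bigcup L_i$. The 1-absorbing primary property transfers by Corollary \ref{c/}: since $N \subseteq L_i$, the submodule $L_i$ is 1-absorbing primary in $M$ if and only if $L_i/N$ is 1-absorbing primary in $M/N$; hence at most two of the $L_i$ fail to be 1-absorbing primary.

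The step I expect to require the most care is the radical-incomparability hypothesis, because it involves the $M$-radical and the two kinds of residuals. I would use the standard identities for the quotient: $(L_i/N :_R M/N) = (L_i :_R M)$ and $(M/N)\text{-}rad(L_i/N) = M\text{-}rad(L_i)/N$ (the latter is exactly the radical identity already invoked in the proof of Corollary \ref{c/}). For an element $\bar{m} = m + N$ of $M/N$, one also has $(L_j/N :_R \bar{m}) = (L_j :_R m)$ whenever $m + N \notin (M/N)\text{-}rad(L_j/N)$, equivalently $m \notin M\text{-}rad(L_j)$. Combining these identifies $\sqrt{(L_i :_R M)}$ with $\sqrt{(\bar{L}_i :_R M/N)}$ and $\sqrt{(L_j :_R m)}$ with $\sqrt{(\bar{L}_j :_R \bar{m})}$, so the incomparability condition for the $L_i$ in $M$ is literally the condition assumed for the $\bar{L}_i$ in $M/N$.

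Having established that $L, L_1, \dots, L_n$ satisfy all the hypotheses of Theorem \ref{av} in $M$, I would apply the hypothesis that the avoidance theorem holds for $M$ to conclude $L \subseteq L_k$ for some $1 \leq k \leq n$. Passing back through $\pi$ (or simply taking quotients by $N$, using $N \subseteq L \cap L_k$) gives $\bar{L} = L/N \subseteq L_k/N = \bar{L}_k$, which is the desired conclusion for $M/N$. The argument is essentially a dictionary translation; the only genuine obstacle is assembling the correct residual and $M$-radical identities for the quotient, and those are precisely the ones already used elsewhere in the paper, so no new machinery is needed.
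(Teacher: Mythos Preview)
Your proposal is correct and follows essentially the same route as the paper: pull the configuration back along the canonical epimorphism, use Corollary~\ref{c/} to transfer the 1-absorbing primary property, translate the radical-incomparability condition via the identities $(L_i/N:_R M/N)=(L_i:_R M)$, $(L_j/N:_R m+N)=(L_j:_R m)$ and $(M/N)\text{-}rad(L_j/N)=M\text{-}rad(L_j)/N$, apply the avoidance theorem in $M$, and push the conclusion forward. Your treatment of the residual and $M$-radical identities is in fact slightly more explicit than the paper's, which handles the incomparability step only by contrapositive and an ``easy to verify'' remark.
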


\begin{proof}
Let $K/N,$ $N_{1}/N,$ $N_{2}/N,...,N_{n}/N$ $(n\geq2)$ be submodules of $M/N$
such that at most two of $N_{1}/N,N_{2}/N,...,N_{n}/N$ are not 1-absorbing
primary and $K/N\subseteq N_{1}/N\cup N_{2}/N\cup\cdots\cup N_{n}/N.$ Hence,
$K\subseteq N_{1}\cup N_{2}\cup\cdots\cup N_{n}$ and at most two of $N_{1},$
$N_{2},...,$ $N_{n}$ are not 1-absorbing primary by Corollary \ref{c/}.
Suppose that \linebreak$\sqrt{(N_{i}/N:_{R}M/N)}\nsubseteq\sqrt{(N_{j}%
/N:_{R}m+N)}$ for all $m+N\in(M/N)\backslash(M-rad(N_{j}/N))$ whenever $i\neq
j$. It is easy to verify that if $\sqrt{(N_{i}:_{R}M)}\subseteq\sqrt
{(N_{j}:_{R}m)}$ for some $m\in M,$ then $\sqrt{(N_{i}/N:_{R}M/N)}%
\subseteq\sqrt{(N_{j}/N:_{R}m+N)}$ for some $m+N\in M/N.$ Also observe that if
$m+N\in(M/N)\backslash(M/N-rad(N_{j}/N))=(M/N)\backslash(M-rad(N_{j})/N)$,
then $m\in M\backslash M-rad(N_{j}).$ Thus, from our assumption $\sqrt
{(N_{i}/N:_{R}M/N)}\nsubseteq\sqrt{(N_{j}/N:_{R}m+N)}$ for all $m+N\in
(M/N)\backslash(M/N-rad(N_{j}/N))$ whenever $i\neq j$, we conclude that
$\sqrt{(N_{i}:_{R}M)}\nsubseteq\sqrt{(N_{j}:_{R}m)}$ for all $m\in M\backslash
M-rad(N_{j})$ whenever $i\neq j.$ From our hypothesis and Theorem \ref{av}, we
have $K\subseteq N_{k}$ for some $1\leq k\leq n.$. Consequently, $K/N\subseteq
N_{k}/N$ for some $1\leq k\leq n$; so we are done.
\end{proof}

In view of Theorem \ref{ef} and Theorem \ref{av}, we conclude 1-absorbing
primary avoidance theorem for rings.

\begin{corollary}
Let $I\subseteq I_{1}\cup I_{2}\cup\cdots\cup I_{n}$ be an efficient covering
of ideals $I_{1},I_{2},...,I_{n}$ of a ring $R$ where $n>2.$ If $\sqrt{I_{i}%
}\nsubseteq\sqrt{(I_{j}:x)}$ for all $x\in R\backslash\sqrt{I_{j}}$ whenever
$i\neq j$, then no $I_{i}$ ($1\leq i\leq n$) is a 1-absorbing primary ideal of
$R$.
\end{corollary}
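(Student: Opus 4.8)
The plan is to deduce this corollary directly from Theorem \ref{ef} by regarding $R$ as a module over itself. First I would observe that $R$, viewed as an $R$-module, is a finitely generated (indeed cyclic) faithful multiplication module: it is generated by $1$, every ideal $I$ satisfies $I = I\cdot R$, and $Ann_R(R) = 0$. In particular the submodules of this module are precisely the ideals of $R$, so the efficient covering $I \subseteq I_1 \cup \cdots \cup I_n$ of ideals is literally an efficient covering of submodules of the module $R$, with $n > 2$.

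Next I would translate every module-theoretic quantity appearing in Theorem \ref{ef} into its ring-theoretic counterpart. For an ideal $I$ one has $(I :_R R) = I$, and since the module $R$ is multiplication, $R\text{-}rad(I) = \sqrt{(I:_R R)}R = \sqrt{I}$ by the formula $M\text{-}rad(N) = \sqrt{(N:_R M)}M$ recorded before Theorem \ref{tn}. Moreover, for an element $x \in R$ regarded as a module element, $(I_j :_R x)$ is exactly the usual ideal quotient $(I_j : x)$, and the set $R \setminus R\text{-}rad(I_j)$ equals $R \setminus \sqrt{I_j}$. Under these identifications the separation hypothesis $\sqrt{I_i} \nsubseteq \sqrt{(I_j : x)}$ for all $x \in R \setminus \sqrt{I_j}$ (whenever $i \neq j$) becomes precisely the hypothesis $\sqrt{(N_i :_R M)} \nsubseteq \sqrt{(N_j :_R m)}$ for all $m \in M \setminus M\text{-}rad(N_j)$ required by Theorem \ref{ef}, with $N_i = I_i$ and $M = R$.

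It then remains to match the two notions of ``1-absorbing primary.'' Since $R$ is a finitely generated faithful multiplication module and $(I :_R R) = I$, Theorem \ref{t0}(2) shows that an ideal $I$ is a 1-absorbing primary submodule of the module $R$ if and only if $I$ is a 1-absorbing primary ideal of $R$. (Concretely, in the module definition a unit value of $m$ forces $ab \in I$ automatically, so the submodule condition on $M = R$ collapses to the ideal condition on non-unit third factors.) Consequently, applying Theorem \ref{ef} to the module $M = R$ with the submodules $N_i = I_i$ yields that no $N_i$, and hence no $I_i$, is a 1-absorbing primary submodule, that is, no $I_i$ is a 1-absorbing primary ideal of $R$.

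I expect no serious obstacle here, since the statement is a specialization of the already-proved Theorem \ref{ef}; the only points demanding care are verifying that $R$ satisfies the standing hypotheses of that theorem (finitely generated multiplication, invoked there through Theorem \ref{t1}(2)) and confirming the bookkeeping that the element-wise quotients and radicals collapse correctly when $M = R$.
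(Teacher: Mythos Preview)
Your proposal is correct and follows exactly the paper's approach: the corollary is stated immediately after Theorem \ref{ef} and Theorem \ref{av} with no separate proof, as a direct specialization to $M=R$. Your careful verification that $R$ is a finitely generated faithful multiplication $R$-module and that all the module-theoretic quantities collapse to the ring-theoretic ones is more detailed than the paper itself provides, but entirely in line with its intent.
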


\begin{corollary}
(1-absorbing Primary Avoidance Theorem for Rings) Let $I,I_{1},I_{2}%
,...,I_{n}$ $(n\geq2)$ be ideals of a ring $R$ such that at most two of
$I_{1},I_{2},...,I_{n}$ are not 1-absorbing primary with $I\subseteq I_{1}\cup
I_{2}\cup\cdots\cup I_{n}.$ If $\sqrt{I_{i}}\nsubseteq\sqrt{(I_{j}:x)}$ for
all $x\in R\backslash\sqrt{I_{j}}$ whenever $i\neq j$, then $I\subseteq I_{k}$
for some $1\leq k\leq n.$
\end{corollary}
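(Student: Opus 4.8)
The plan is to deduce this ring-theoretic statement as the special case of the module avoidance theorem (Theorem~\ref{av}) obtained by taking $M=R$, the ring $R$ regarded as a module over itself. First I would record that $R$ is a cyclic, hence finitely generated, faithful multiplication $R$-module: it is generated by $1$, every ideal $I$ satisfies $I=IR=(I:_{R}R)R$, and $\mathrm{Ann}_{R}(R)=0$. In particular the standing hypothesis of Section~3 (that $M$ be a finitely generated multiplication module) is met, so Theorem~\ref{av} applies with $M=R$. Under this identification the submodules of $M$ are precisely the ideals of $R$, so $I,I_{1},\dots,I_{n}$ double as submodules $N,N_{1},\dots,N_{n}$.

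Next I would translate every module-theoretic ingredient of Theorem~\ref{av} into its ring-theoretic counterpart. For an ideal $I$ one has $(I:_{R}M)=(I:_{R}R)=I$, and since $M=R$ is a finitely generated multiplication module, $M\text{-}rad(I)=\sqrt{(I:_{R}M)}M=\sqrt{I}$ by \cite[Theorem 2.12]{Smith}. Moreover, because $R$ is a finitely generated faithful multiplication $R$-module, Theorem~\ref{t0}(2) shows that an ideal $I$ is a 1-absorbing primary ideal of $R$ if and only if it is a 1-absorbing primary submodule of $M$. Consequently the phrase ``$I_{j}$ is not 1-absorbing primary'' carries over verbatim, and the avoidance hypothesis $\sqrt{I_{i}}\nsubseteq\sqrt{(I_{j}:x)}$ for all $x\in R\backslash\sqrt{I_{j}}$ becomes precisely $\sqrt{(N_{i}:_{R}M)}\nsubseteq\sqrt{(N_{j}:_{R}m)}$ for all $m\in M\backslash M\text{-}rad(N_{j})$, upon setting $x=m$.

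With these dictionaries in place, the hypotheses of the corollary are identical to those of Theorem~\ref{av}: at most two of $N_{1},\dots,N_{n}$ fail to be 1-absorbing primary submodules, $N\subseteq N_{1}\cup\cdots\cup N_{n}$, and the radical containment condition holds. I would then simply invoke Theorem~\ref{av} to obtain $N\subseteq N_{k}$, that is $I\subseteq I_{k}$, for some $1\leq k\leq n$, which is the desired conclusion.

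The only genuine care needed---and the single point I would flag as the ``obstacle,'' though it is a mild one---lies in verifying that the radical identity $M\text{-}rad(I)=\sqrt{I}$ holds on the nose and that the set $M\backslash M\text{-}rad(N_{j})$ indeed collapses to $R\backslash\sqrt{I_{j}}$. Once that bookkeeping is confirmed, no further argument is required, since the entire combinatorial content has already been executed in Theorem~\ref{ef} and Theorem~\ref{av}; the corollary is a pure specialization.
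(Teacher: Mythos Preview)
Your proposal is correct and is exactly the specialization the paper intends: the paper offers no separate proof but simply states that the corollary follows ``in view of Theorem~\ref{ef} and Theorem~\ref{av},'' i.e., by taking $M=R$ and translating the module hypotheses into ideal-theoretic ones just as you have done. Your verification that $R$ satisfies the standing hypotheses of Section~3 and that $M\text{-}rad(I)=\sqrt{I}$, $(I:_{R}R)=I$ is precisely the bookkeeping required to make this specialization rigorous.
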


\end{document}